\newtheorem{thm}{Theorem}[section]
\newtheorem{lem}[thm]{Lemma}
\newtheorem{cor}[thm]{Corollary}
\newtheorem{prop}[thm]{Proposition}
\theoremstyle{definition}
\newtheorem{defn}[thm]{Definition}
\newtheorem{rmk}[thm]{Remark}
\numberwithin{equation}{section}
\newcommand{\ww}{{\mathbf{w}}}
\newcommand{\vvv}{{\mathbf{v}}}
\newcommand{\vv}[1][n]{{\mathbf{v}_{#1}}}
\newcommand{\dd}{{\mathbf{d}}}
\newcommand{\ee}{{\mathbf{e}}}
\newcommand{\kk}{{\mathbf{k}}}
\newcommand{\xx}{{\mathbf{x}}}
\newcommand{\zz}{{\mathbf{z}}}
\newcommand{\ra}{{\rightarrow}}
\newcommand{\nra}{{\nrightarrow}}
\newcommand{\z}{{\mathbb Z}}
\newcommand{\Mod}[1]{\ (\mathrm{mod}\ #1)}
\begin{document}

\title{On a quadratic Waring's problem with congruence conditions}


\author[D. Kim]{Daejun Kim}
\address{Department of Mathematical Sciences\\ Seoul National University\\
 Seoul 151-747, Korea}
\email{goodkdj@snu.ac.kr}

\thanks{This work was supported by BK21 PLUS SNU Mathematical Science Division.}

\subjclass[2010]{Primary 11E12, 11E25}

\keywords{Waring's problem, Sums of squares, Representations of cosets}



\begin{abstract} 
	For each positive integer $n$, let $g_\Delta(n)$ be the smallest positive integer $g$ such that every complete quadratic polynomial in $n$ variables which can be represented by a sum of odd squares is represented by a sum of at most $g$ odd squares.
	In this paper, we analyze $g_\Delta(n)$ by studying representations of integral quadratic forms by sums of squares with certain congruence condition.
	We prove that the growth of $g_\Delta(n)$ is at most an exponential of $\sqrt{n}$, which is the same as the best known upper bound on the $g$-invariants of the original quadratic Waring's problem.
	We also determine the exact value of $g_\Delta(n)$ for each positive integer less than or equal to $4$.
\end{abstract}

\maketitle

\section{Introduction}
In 1770, Lagrange proved the Four-Square Theorem, which states that every positive integer is a sum of at most four squares of integers. This result has been generalized in many directions. 
In 1930's, a higher dimensional generalization, the so-called {\em new} ({\em or quadratic}) {\em Waring's Problem}, was initiated and studied by Mordell \cite{M1} and Ko \cite{K1}. 
In those papers, they proved that for any integer $1\le n \le 5$, every positive definite integral quadratic form in $n$ variables is represented by a sum of $n+3$ squares, and $n+3$ is the smallest number with this property. 
Later, Mordell \cite{M2} proved that the quadratic form corresponding to the root lattice $E_6$ cannot be represented by any sum of squares. 

This result of Mordell lead us to consider the number $g_\mathbb{Z}(n)$ defined as the smallest positive integer $g$ such that every quadratic form in $n$ variables which can be represented by a sum of squares is represented by a sum of at most $g$ squares.
The numbers $g_\mathbb{Z}(n)$ are called the ``$g$-invariants" of $\z$.
Then the results of Lagrange's, Mordell's and Ko's mentioned above can now be rewritten as $g_\z(n)=n+3$ for $1\le n \le 5$. In \cite{KO1}, Kim and Oh proved that $g_\z(6)=10$, which disproves the earlier conjecture made by Ko \cite{K2} that $g_\z(6)=9$. This is the last known value of $g_\z(n)$.

On the other hand, it has been studied to find an upper bound of $g_\z(n)$ as a function of $n$.
Icaza \cite{I} gave the first explicit but astronomical upper bound by computing the so called HKK-constant in \cite{HKK}. 
Later, Kim-Oh \cite{KO3} proved that $g_\z(n)=O(3^{n/2}n\log n)$, which improves Icaza's bound.
Recently, Beli-Chan-Icaza-Liu \cite{BCIL} obtained a better upper bound $g_\z(n)=O(e^{(4+2\sqrt2+\varepsilon)\sqrt{n}})$ for any $\varepsilon>0$.

In this paper, we consider a quadratic Waring's problem with a congruence condition modulo $2$ as a generalization of the original problem.
One may naturally generalize the Lagrange's four square theorem by considering the smallest number $r$ such that every positive integer is a sum of at most $r$ squares of odd integers.
In fact, this number is $10$ and 
\[42=5^2+3^2+1^2+1^2+1^2+1^2+1^2+1^2+1^2+1^2\]
is the smallest positive integer which is a sum of $10$ squares of odd integers but is not a sum of less than $10$ squares of odd integers (cf. Proposition \ref{gdelta1}, see also \cite{KO}).

As a higher dimensional generalization of this problem, we introduce new $g$-invariants $g_\Delta(n)$ in the following paragraphs.

Let $f(\mathbf{x})=f(x_1,...,x_n)$ be a quadratic polynomial such that
\[f(\mathbf{x})= Q(\mathbf{x})+L(\mathbf{x})+c,\]
where $Q(\mathbf{x})$ is a quadratic form, $L(\mathbf{x})$ is a linear form, and $c$ is a constant. We always assume that $Q$ is positive definite. 
Hence, there exists a unique vector $\ww_f \in\mathbb{Q}^n$ such that $L(\mathbf{x})=2B(\mathbf{x},\ww_f)$, where $B$ is the bilinear form such that $B(\mathbf{x},\mathbf{x})=Q(\mathbf{x})$.
The quadratic polynomial $f(\mathbf{x})$ is called {\it complete} if $c=Q(\ww_f)$, that is,
\[f(\mathbf{x})= Q(\mathbf{x})+2B(\mathbf{x},\ww_f)+Q(\ww_f)= Q(\mathbf{x}+\ww_f).\]
We say that a quadratic polynomial $f(\mathbf{x})=f(x_1,...,x_n)$ is represented by a quadratic polynomial $g(\mathbf{y})=g(y_1,...,y_m)$ $(m\ge n)$ if there exists $T\in M_{n\times m}(\mathbb{Z})$ and $\mathbf{c}\in\mathbb{Z}^n$ such that  
\[f(\mathbf{x}) = g(\mathbf{x}T+\mathbf{c}).\]

Now let $\Delta_r(\mathbf{y})$ be the following quadratic polynomial in variables $\mathbf{y}=(y_1,...,y_r)$:
\[\Delta_r(y_1, ... , y_r ) :=  (2y_1+ 1)^2 + \cdots + (2y_r+ 1)^2.\]
A quadratic polynomial $f(\mathbf{x})$ is said to be represented by a sum of $r$ odd squares if it is represented by $\Delta_r(\mathbf{y})$. For each positive integer $n$, we define the set $\mathcal{F}_n$ of all complete quadratic polynomials $f(\mathbf{x})$ in variables $\mathbf{x}=(x_1,...,x_n)$ which can be represented by a sum of odd squares.
For a quadratic polynomial $f(\mathbf{x})$ in $\mathcal{F}_n$, we define
\[r(f):=\text{min}\left\{ r \in\mathbb{N} : f(\mathbf{x}) \text{ can be represented by }\Delta_r(\mathbf{y}) \right\},\]
and we define the following new $g$-invariant of $\Delta_r$:
\[g_\Delta(n):= \text{max}\left\{ r(f) : f(\mathbf{x}) \in \mathcal{F}_n \right\}. \]
One may deduce that the problem of determining $g_\Delta(1)$ is equivalent to the problem of representing positive integers by sums of odd squares explained above, so that $g_\Delta(1)=10$. Furthermore, we will see in Section \ref{geometricapproach} that $g_\Delta(n)$ can be analyzed by studying representation of integral quadratic forms by sums of squares with a congruence condition modulo $2$.
Our main results can be stated as

\begin{thm} \label{thm1}
Let $n$ be a positive integer. For any $\varepsilon>0$ we have
\[g_\Delta(n) = O\left(e^{(4+2\sqrt{2}+\varepsilon)\sqrt{n}}\right).\]
\end{thm}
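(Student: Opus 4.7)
The plan is to translate Theorem \ref{thm1} into a coset representation problem for integral lattices, and then to transport the escalator strategy of Beli--Chan--Icaza--Liu \cite{BCIL} (which gives the same bound for $g_\z(n)$) into that setting. Given $f \in \mathcal{F}_n$, write $f(\xx)=Q(\xx+\ww_f)$ and let $L$ be the integral lattice associated with $Q$. A representation $f(\xx)=\Delta_r(\xx T+\mathbf{c})$ then amounts to producing integer linear forms $y_1(\xx),\dots,y_r(\xx)$ with $y_i(\xx) \equiv 1 \Mod{2}$ and $Q(\xx+\ww_f)=\sum_{i=1}^r y_i(\xx)^2$, that is, an isometric map of the coset $L+\ww_f$ into the set of odd-coordinate vectors $\mathbf{1}+2\z^r \subset \z^r$; the precise dictionary is what Section \ref{geometricapproach} will provide. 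Hence $g_\Delta(n)$ is exactly the smallest $r$ that works uniformly for all such cosets arising from $f\in\mathcal{F}_n$.

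Once this reformulation is in place, I would mimic the Minkowski--Siegel / escalator argument of \cite{BCIL}. Starting from any representation $f=\Delta_R(\xx T+\mathbf{c})$ with possibly enormous $R$, the shape of $f$ together with this representation controls the archimedean size of the coset $L+\ww_f$. The core input is a mass-formula lower bound showing that once $r$ exceeds $r_0 = \bigl\lceil e^{(4+2\sqrt{2}+\varepsilon)\sqrt{n}}\bigr\rceil$, the coset $L+\ww_f$ must admit an isometric embedding into $\mathbf{1}+2\z^r$, i.e., a representation by $\Delta_r$. Complete polynomials $f$ whose cosets have only bounded minimum form a set finite up to the obvious equivalence, by a standard compactness/class-number argument, and for them $r(f)$ is bounded by a constant depending only on $n$ that is absorbed into the same exponential.

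The main obstacle is confined to the dyadic place. At every odd prime the oddness condition on the coordinates is invisible, so the $p$-adic local densities coincide exactly with those appearing in \cite{BCIL}, and the archimedean density is only worsened by a harmless power of $2$ coming from the rescaling. The substantive new task is to establish a uniform lower bound, in terms of $n$ alone, for the $2$-adic density of representations of $L+\ww_f$ by vectors of $\z^r$ all of whose coordinates are odd. I would carry this out by a direct Hensel-lifting analysis that reduces the question to solvability modulo $8$; here the hypothesis that $f$ is \emph{complete}, i.e.\ $c=Q(\ww_f)$, is essential, since it forces the constant terms on both sides of the representation equation to agree modulo $4$ and so permits the dyadic lifting. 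Once such a uniform lower bound is secured, the leading exponent $4+2\sqrt{2}$ of \cite{BCIL} survives intact, and Theorem \ref{thm1} follows.
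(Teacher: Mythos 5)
Your first paragraph is fine and matches the paper's Section \ref{geometricapproach}: the problem is indeed reformulated as representing $\z$-cosets $K+\frac{1}{2}\ww$ by $\Sigma_r=I_r+\frac{1}{2}\vv[r]$. But the engine you then propose is not the one that produces the stated bound, and it contains a genuine gap. The exponent $4+2\sqrt{2}$ does not come from a Minkowski--Siegel mass formula or from local-density lower bounds; that analytic route (Hsia--Kitaoka--Kneser, made explicit by Icaza) is precisely what yields the ``astronomical'' constants the paper is trying to beat. In \cite{BCIL} and here the exponent is an artifact of reduction theory: one takes a balanced HKZ-reduced basis, whose unipotent part has entries controlled by $\overline{c}(m)=De^{\sqrt{2m}}$, and the threshold $G(n)$ contains $e^{(4+4\sqrt{2})\sqrt{n/2}}=e^{(4+2\sqrt{2})\sqrt{n}}$ because $\overline{c}(n-j)^2\overline{c}(j)^2$ is maximized at $j=n/2$. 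The actual argument (Proposition \ref{mainprop}) writes the Gram matrix as $M=P^tP+A+S$ with $P^tP+\frac{1}{2}\ww\ra\Sigma_n$ built by hand, and then splits $A+S$ into $O(n^2)$ small blocks each represented by $\Sigma_5$ or $\Sigma_6$ (Lemmas \ref{lem2} and \ref{matrixsplit}); this applies whenever $\mu(K)\ge G(n)$ and gives representation by about $3n^2$ odd squares. A uniform dyadic density bound, even if you established it, would not by itself recover this quantitative threshold, so your claim that the exponent ``survives intact'' is unsupported.

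Your treatment of the remaining cosets also fails: cosets with bounded minimum do \emph{not} form a finite set up to equivalence (already $\langle 1\rangle\perp\langle m\rangle$ gives infinitely many isometry classes of rank $2$ with $\mu=1$), so there is no compactness or class-number argument to invoke there. The paper instead handles $\mu(K)<G(n)$ by induction on $n$ (Proposition \ref{proprecgn}): in any representation by $\Delta_r$, at most $\lfloor G(n)\rfloor$ of the linear forms can involve the variable attached to a minimal vector, and setting that variable to $0$ in the remaining forms leaves a complete polynomial in $n-1$ variables represented by $\Delta_{r-\lfloor G(n)\rfloor}$, giving $g_\Delta(n)\le\max\{g_\Delta(n-1)+G(n),\,3n^2-3n+11\}$ and, after summing, the theorem. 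Both mechanisms --- the explicit block splitting for large minimum and the variable-dropping induction for small minimum --- are missing from your sketch, and the mass-formula plan as written does not close.
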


\begin{thm}\label{thm2}
We have $g_\Delta(1)=10$ and $g_\Delta(n)=n+10$ for $n=2,3,4$.
\end{thm}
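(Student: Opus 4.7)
My plan for Theorem~\ref{thm2} is to separately establish the lower bound $g_\Delta(n)\ge n+10$, via explicit extremal polynomials, and the matching upper bound $g_\Delta(n)\le n+10$, using the geometric dictionary of Section~\ref{geometricapproach} throughout. That dictionary identifies a representation $f(\mathbf{x})=\Delta_r(\mathbf{x}T+\mathbf{c})$ with $T\in M_{n\times r}(\z)$ and $\mathbf{c}\in\z^r$ with the Gram condition $4TT^t=(\text{Gram of }Q)$ together with an odd integer vector $\mathbf{d}=2\mathbf{c}+\mathbf{1}$ lying in the rational row span of $T$ and satisfying $|\mathbf{d}|^2=Q(\mathbf{w}_f)$; the vector $\mathbf{a}=2\mathbf{w}_f$ is then determined by $\mathbf{a}T=\mathbf{d}$, and the coset condition for $\mathbf{w}_f$ is built in by requiring $\mathbf{d}$ to have all entries odd.

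For the case $n=1$, I would take $f_1(x)=42(2x+1)^2$. Completeness forces every representation to satisfy $d_j=\pm T_j$ via the equality case of Cauchy--Schwarz applied to $\sum T_j^2=\sum T_j d_j=\sum d_j^2=42$, so $r(f_1)$ equals the minimum number of odd integer squares summing to $42$. The congruence $42\equiv 2\pmod 8$ restricts this minimum to $\{2,10,18,\dots\}$, and an elementary check rules out $r=2$ (since $42$ is not a sum of two odd squares), giving $r(f_1)=10$ (realized by $42=5^2+3^2+8\cdot 1^2$). Combined with Proposition~\ref{gdelta1}, this yields $g_\Delta(1)=10$.

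For the lower bound $g_\Delta(n)\ge n+10$ with $n\in\{2,3,4\}$, the plan is to exhibit $f_n\in\mathcal{F}_n$ with $r(f_n)=n+10$. The naive orthogonal augmentation $f_1(x_1)+\sum_{i=2}^{n}(2x_i+1)^2$ only attains $r=n+9$, because each unit summand can be peeled off via the exact identity $r(\tilde f)=r(f)+1$ where $\tilde f(\mathbf{x},x_n)=f(\mathbf{x})+(2x_n+1)^2$; one verifies this by noting that the column of $T$ corresponding to the new variable must be a unit vector with zero entries in the other rows, so the remaining $r-1$ columns constitute a representation of $f$. The extremal construction must therefore entangle the additional dimensions with the $42$-part, using a non-diagonal $Q_n$ and a carefully chosen coset $\mathbf{w}_{f_n}$ so that the Gram equation $4TT^t=Q_n$ together with the requirement that $\mathbf{a}T$ be an odd vector jointly preclude any representation of length less than $n+10$; the cases $n=3,4$ then follow from the $n=2$ extremal by the peel-off relation.

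For the upper bound $g_\Delta(n)\le n+10$, I would argue by induction on $n$ with base case $n=1$ supplied above. The inductive step, given $f\in\mathcal{F}_n$ for $n\ge 2$, produces an odd integer affine form $\ell(\mathbf{x})$ such that $f(\mathbf{x})-(2\ell(\mathbf{x})+1)^2$ is a complete quadratic polynomial in $\mathcal{F}_{n-1}$ after a linear change of variables; combined with the inductive hypothesis this gives $r(f)\le 1+(n-1)+10=n+10$. Geometrically, the descent amounts to selecting a primitive odd integer vector in the row span of an optimal $T$ whose orthogonal complement supports a valid lower-rank coset representation. The principal obstacle is ensuring that this descent succeeds uniformly for all admissible $(Q,\mathbf{w}_f)$ in dimensions $2$, $3$, and $4$; the expected tools are classical genus-theoretic classifications of positive definite quadratic lattices in low rank, together with explicit manipulations tracking the coset $\mathbf{w}_f$ modulo $L$ through the reduction.
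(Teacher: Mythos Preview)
Your upper-bound strategy contains a self-contradiction. You propose to show $g_\Delta(n)\le n+10$ by induction, where the inductive step peels off a single odd square: for every $f\in\mathcal{F}_n$ you claim to find an affine form $\ell$ with $f-(2\ell+1)^2\in\mathcal{F}_{n-1}$, yielding $r(f)\le 1+g_\Delta(n-1)$. But this inequality, once established for all $f$, gives $g_\Delta(n)\le g_\Delta(n-1)+1$, hence $g_\Delta(2)\le g_\Delta(1)+1=11$. This directly contradicts the lower bound $g_\Delta(2)\ge 12$ that you yourself are trying to prove. So the descent step \emph{cannot} succeed uniformly; for the extremal $f_2$ (whatever it is) no such $\ell$ exists, and no amount of genus-theoretic bookkeeping will change that arithmetic. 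The paper's upper bound goes by an entirely different route: it invokes the class-number-one fact $h(\Sigma_k)=1$ for $k\le 8$ (Proposition~\ref{hIn} and Corollary~\ref{localglobal}) to reduce to a local problem, proves a key representation lemma (Lemma~\ref{lem4}) showing $K+\tfrac12\ww\to\Sigma_{k+8}$ whenever a suitable rank-drop of the Gram matrix stays positive definite, and then runs a Minkowski-reduction case analysis (Lemmas~\ref{lem3}, \ref{lem4}) with computer verification of finitely many residual cases. There is no single-square descent.

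Your lower-bound section is also incomplete. You correctly observe that the peel-off identity $r(\tilde f)=r(f)+1$ for $\tilde f(\mathbf{x},x_{n+1})=f(\mathbf{x})+(2x_{n+1}+1)^2$ reduces the problem to finding one extremal $f_2\in\mathcal{F}_2$ with $r(f_2)=12$, but you never produce it; the sentence ``the extremal construction must therefore entangle the additional dimensions with the $42$-part'' is a description of what is needed, not a construction. The paper supplies concrete examples (Proposition~\ref{lbdgdeltan}): for $n=2$ the coset $K+\tfrac12\dd_2$ with Gram matrix $\left(\begin{smallmatrix}8&2\\2&12\end{smallmatrix}\right)$ is represented by $\Sigma_{12}$ but not by $\Sigma_4$, the obstruction being that $K$ is not represented by $I_4$ over $\q_2$; for $n=3,4$ it uses diagonal lattices $\langle 3,3,23\rangle$ and $\langle 1,3,3,23\rangle$ with a $3$-adic obstruction. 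Note in particular that the paper does \emph{not} build the $n=3,4$ examples by augmenting the $n=2$ one.
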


Note that our result presents the same growth as the best known upper bound on $g_\z(n)$.
More precisely, the upper bound on $g_\Delta(n)$ we obtain is approximately $n^2$ times the upper bound on $g_\mathbb{Z}(n)$ obtained in \cite{BCIL}. 
We will adopt geometric language of quadratic spaces, lattices and $\mathbb{Z}$-cosets in studying $g_\Delta(n)$ so that we shall use the geometric theory of those.

The rest of the paper is organized as follows. 
In Section \ref{preliminaries}, we introduce the geometric language and theory of quadratic spaces, lattices and $\mathbb{Z}$-cosets, especially the concept of representations of $\z$-cosets. 
In Section \ref{geometricapproach}, we consider the problem geometrically by translating representations of quadratic polynomials into representations of $\z$-cosets explicitly. The exact value of $g_\Delta(1)$ will also be determined.
Section \ref{Lemmas} contains some technical lemmas which will essentially be used in the following sections.
The proof of Theorem \ref{thm1} will be presented in Section \ref{ubdsec}.
In Section \ref{gdelta1234}, we will determine the exact values of $g_\Delta(n)$ for $n=2,3,4$ through some extensive computation.

For any unexplained notations, terminologies, and basic facts about $\z$-lattices, we refer the readers to \cite{OM2}.

\section{Representation of cosets} \label{preliminaries}
In this section, we introduce the geometric theory of quadratic $\mathbb{Z}$-lattices. 
We refer the readers to \cite[Section 4]{CO} for the theory under more general setting. For simplicity, the quadratic map and its associated bilinear form on any quadratic space will be denoted by $Q$ and $B$, respectively. 
The set of all places on $\mathbb{Q}$ including the infinite place $\infty$ will be denoted by $\Omega$.

A $\mathbb{Z}$-lattice is a finitely generated $\mathbb{Z}$-module (hence a free $\mathbb{Z}$-module) $L$ on an $m$-dimensional quadratic space $V$ over $\mathbb{Q}$.
A $\mathbb{Z}$-coset is a set $L+\vvv$, where $L$ is a $\mathbb{Z}$-lattice on $V$ and $\vvv$ is a vector in $V$. 
A $\mathbb{Z}$-coset $K+\ww$ on an $n$-dimensional quadratic space $W$ is said to be represented by another $\mathbb{Z}$-coset $L+\vvv$  on an $m$-dimensional space $V$, which is denoted by 
\[K+\ww \ra L+\vvv,\]
if there exists an isometry $\sigma: W \ra V$ such that $\sigma(K+\ww) \subseteq L+\vvv$, which is equivalent to
\[ \sigma(K)\subseteq L \quad \text{and} \quad \sigma(\ww)-\vvv \in L.\]
Two $\mathbb{Z}$-cosets $K+\ww$ and $L+\vvv$ are said to be isometric, which is denoted by $K+\ww \cong L+\vvv$, if one is represented by another one and vice versa. 
For each $p\in\Omega-\{\infty\}$, $\mathbb{Z}_p$-cosets and representations of $\mathbb{Z}_p$-cosets are defined analogously.

As in the case of quadratic forms and lattices, there is a one-to-one correspondence between the set of equivalence classes of complete quadratic polynomials in $n$ variables and the set of isometry classes of $\mathbb{Z}$-cosets on $n$-dimensional quadratic spaces. We will describe this correspondence concretely in Proposition \ref{correspondence}.

\begin{defn}
Let $L+\vvv$ be a $\mathbb{Z}$-coset on a quadratic space $V$. The genus of $L+\vvv$ is the set 
\[\text{gen}(L+\vvv)=\{K+\ww \text{ on } V \text{ : } K_p+\ww \cong L_p+\vvv \text{ for any } p \in \Omega \}. \]
\end{defn}

\begin{lem}\label{finreplacelem}
Let $L+\vvv$ be a $\mathbb{Z}$-coset on a quadratic space $V$ and let $S$ be a finite subset of $\Omega$. Suppose that $\mathbb{Z}_p$-coset $L(p)+\xx_p$ on $V_p$ is given for each $p\in S$. Then there exists a $\mathbb{Z}$-coset $M+\zz$ on $V$ such that 
\[
M_p+\zz=
\begin{cases} 
L(p)+\xx_p & \text{if } p\in S,\\
L_p+\vvv & \text{if } p \in \Omega -S.
\end{cases}
\]
\end{lem}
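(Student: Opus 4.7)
The plan is to reduce the lemma to two familiar approximation statements: one for lattices and one for vectors. We first find a global $\z$-lattice $M$ with the prescribed localizations, and then separately find a global vector $\zz \in V$ whose image in each $V_p/M_p$ lies in the appropriate coset.

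For the first step, I would invoke the classical local-to-global theorem for $\z$-lattices (see, e.g., 81:14 in \cite{OM2}, or the general version in \cite{CO}): since $L$ is a $\z$-lattice on $V$ and the $L(p)$ are $\z_p$-lattices on $V_p$ for $p \in S$, there exists a $\z$-lattice $M$ on $V$ satisfying
\[
M_p = L(p) \text{ for } p\in S, \qquad M_p = L_p \text{ for } p\in\Omega - S.
\]
This takes care of the lattice component of the desired $\z$-coset.

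Now I need $\zz\in V$ such that $\zz-\xx_p\in M_p$ for every $p\in S$ and $\zz-\vvv\in M_p$ for every $p\in\Omega-S$. Writing $\zz_0 = \zz-\vvv$ and $\yy_p = \xx_p - \vvv \in V_p$ for $p\in S$, this is equivalent to producing $\zz_0\in V$ with
\[
\zz_0 \equiv \yy_p \Mod{M_p} \text{ for } p\in S, \qquad \zz_0 \in M_p \text{ for } p\in\Omega - S.
\]
The key observation is the classical decomposition
\[
V/M \;\cong\; \bigoplus_{p} V_p/M_p,
\]
which holds because $V_p/M_p$ is the $p$-primary part of the torsion $\z$-module $V/M$. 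Define an element of $\bigoplus_p V_p/M_p$ whose $p$-component is $\yy_p \Mod{M_p}$ for $p\in S$ and $0$ for $p\in\Omega-S$; this element has only finitely many nonzero components, so it lies in the direct sum and can be lifted to a vector $\zz_0\in V$ with the required local behavior. Then $\zz := \zz_0 + \vvv$ satisfies $M_p+\zz = L(p)+\xx_p$ for $p\in S$ and $M_p+\zz = L_p+\vvv$ for $p\in \Omega-S$, as desired.

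The only possible obstacle is the simultaneous matching at \emph{all} places, not merely the finitely many in $S$. This is handled cleanly by the direct sum decomposition above: outside $S$ the prescribed target is already consistent with the global lattice $M$ (since $M_p = L_p$ there), so the conditions at $p \notin S$ reduce to the trivial requirement that $\zz_0$ lie in $M_p$, which is automatic once we lift an element of $\bigoplus_p V_p/M_p$ supported on $S$. Thus no serious global obstruction arises, and the two-step construction goes through.
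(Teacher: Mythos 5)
Your argument is correct. The paper itself gives no proof of this lemma---it simply cites Lemma 4.2 of \cite{CO}---and your two-step construction (first a global lattice $M$ with the prescribed localizations via the classical local--global theorem for lattices, then a global vector obtained by lifting through $V/M \cong \bigoplus_p V_p/M_p$, which is legitimate because $V$ is dense in each $V_p$ and $M_p$ is open, so $V \to V_p/M_p$ is surjective) is exactly the standard proof underlying that citation. The only point worth making explicit is that $\Omega$ includes the infinite place, where $L_\infty = V_\infty$ and the coset condition is vacuous, so restricting your direct-sum decomposition to the finite places loses nothing.
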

\begin{proof}
See Lemma 4.2 of \cite{CO}.
\end{proof}

Let $O_\mathbb{A}(V)$ be the adelization of the orthogonal group of $V$. 
By Lemma \ref{finreplacelem}, $O_\mathbb{A}(V)$ naturally acts transitively on $\text{gen}(L+\vvv)$ and hence
\[\text{gen}(L+\vvv) = O_\mathbb{A}(V)\cdot (L+\vvv).\]
Let $O_\mathbb{A}(L+\vvv)$ be the stabilizer of $L+\vvv$ in $O_\mathbb{A}(V)$. Then the isometry classes in $\text{gen}(L+\vvv)$ can be identified with 
\[O(V)\backslash O_\mathbb{A}(V) / O_\mathbb{A}(L+\vvv).\]
The {\em class number} of $L+\vvv$, denoted by $h(L+\vvv)$, is the number of classes in $\text{gen}(L+\vvv)$, which is also the number of elements in $O(V)\backslash O_\mathbb{A}(V) / O_\mathbb{A}(L+\vvv)$.
The class number $h(L+\vvv)$ is finite and $h(L+\vvv) \ge h(L)$, where $h(L)$ is the class number of $L$ (see Corollary 4.4 of \cite{CO}). Note that $h(L)$ is equal to the number of elements in $O(V)\backslash O_\mathbb{A}(V)/O_\mathbb{A}(L)$. For each $p\in \Omega$, we have 
\[O(L_p+\vvv) = \{ \sigma\in O(V_p) \text{ : } \sigma(L_p)=L_p \text{ and } \sigma(\vvv) \equiv \vvv \text{ mod }L_p \} \subseteq O(L_p).\]

From now on, let $I_n=\mathbb{Z}[\ee_1,...,\ee_n]$ be the $\mathbb{Z}$-lattice whose Gram matrix with respect to $\{\ee_1,...,\ee_n\}$ is the identity matrix.
For the sake of convenience, the vector $\ee_1+\cdots +\ee_n$ will be denoted by $\vv$ and the $\mathbb{Z}$-coset $I_n + \frac{1}{2}\vv$ will be denoted by $\Sigma_n$.

\begin{prop} \label{hIn}
For any $1 \le n \le 8$, we have $h(\Sigma_n) = 1$.
\end{prop}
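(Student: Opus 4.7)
The plan is to show that every $\z$-coset in $\gen(\Sigma_n)$ is globally isometric to $\Sigma_n$, by working in three steps.

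First, given $M + \vvv' \in \gen(\Sigma_n)$, localization forces $M \in \gen(I_n)$, and the classical fact $h(I_n) = 1$ for $1 \le n \le 8$ provides a global isometry $M \cong I_n$. Applying it, I may assume $M = I_n$ throughout.

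Next, I translate the class number into a local orbit count. Using $h(I_n) = 1$, the double-coset identity yields $h(\Sigma_n) = |O(I_n) \backslash O_\mathbb{A}(I_n) / O_\mathbb{A}(\Sigma_n)|$, which is in bijection with the orbits of $O(I_n)$ on the adelic orbit of $\frac{1}{2}\vv$ modulo $I_n$ under $O_\mathbb{A}(I_n)$. This adelic orbit is trivial at every odd prime (where $\frac{1}{2}\vv \in (I_n)_p$) and reduces to the local $O((I_n)_2)$-orbit $\Omega_2$ of $\frac{1}{2}\vv \bmod (I_n)_2$ in $\frac{1}{2}(I_n)_2/(I_n)_2 \cong \mathbb{F}_2^n$ at $p=2$. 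Since $O(I_n) = \{\pm 1\}^n \rtimes S_n$ acts trivially modulo $2$ via its sign component, only $S_n$ acts on $\Omega_2$ by coordinate permutations, and I obtain $h(\Sigma_n) = |\Omega_2/S_n|$.

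Finally, I would argue that $\Omega_2$ is a single point. For any $\sigma \in O((I_n)_2)$, set $\mathbf{u} := \sigma(\vv)$. Then $\mathbf{u}^\perp \cap (I_n)_2 = \sigma(\vv^\perp \cap (I_n)_2)$ is isometric to the even lattice $A_{n-1} \otimes \mathbb{Z}_2$. If $\mathbf{u}$ had some even entry $u_i$ (including zero) and some odd entry $u_j$, the vector $x := u_i \ee_j - u_j \ee_i$ would lie in $\mathbf{u}^\perp \cap (I_n)_2$ with odd norm $u_i^2 + u_j^2$, contradicting evenness. If all entries of $\mathbf{u}$ were even, $\mathbf{u} \in 2(I_n)_2$ would contradict the primitivity of $\sigma(\vv)$ inherited from that of $\vv$. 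So every entry of $\mathbf{u}$ is odd, giving $\mathbf{u} \equiv \vv \pmod{2(I_n)_2}$ and hence $\sigma(\tfrac{1}{2}\vv) \equiv \tfrac{1}{2}\vv \pmod{(I_n)_2}$. Thus $\Omega_2$ is the singleton $\{\tfrac{1}{2}\vv \bmod (I_n)_2\}$ and $h(\Sigma_n) = 1$.

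The conceptual heart of the argument is the third step, where the even-complement observation pins down the local orbit; this is a short coordinate argument once one recognizes that $\vv^\perp \cap I_n \cong A_{n-1}$ is even. The second step is routine adelic bookkeeping from the setup of Section~\ref{preliminaries}, and the hypothesis $n \le 8$ is used only in the first step to invoke $h(I_n) = 1$.
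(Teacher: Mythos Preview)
Your proof is correct and shares the paper's overall architecture: both reduce the statement to the equality $O((\Sigma_n)_2)=O((I_n)_2)$ at the prime $2$ and then invoke $h(I_n)=1$ for $1\le n\le 8$. In fact your Step~3 establishes exactly that equality (if the $O((I_n)_2)$-orbit of $\tfrac12\vv$ modulo $(I_n)_2$ is a single point, then every $\sigma\in O((I_n)_2)$ lies in $O((\Sigma_n)_2)$), so your Step~2 orbit-counting is more than you need: once $\Omega_2$ is a singleton you can conclude $O_{\mathbb A}(\Sigma_n)=O_{\mathbb A}(I_n)$ directly, which is precisely how the paper finishes.

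The genuine difference is the local argument at $p=2$. The paper proves that each column sum $\sum_i t_{ik}$ is odd by an induction on $n$ together with a parity analysis of the rows, with a separate check when some row has exactly five odd entries (this is where the restriction $n\le 8$ enters the paper's combinatorics). Your even-complement argument---that $\vv^\perp\cap(I_n)_2\cong A_{n-1}\otimes\z_2$ is even, so a putative vector $u_i\ee_j-u_j\ee_i$ of odd norm forces all coordinates of $\sigma(\vv)$ to have the same parity, and primitivity rules out ``all even''---is shorter, avoids the case analysis, and in fact shows $O((\Sigma_n)_2)=O((I_n)_2)$ for \emph{every} $n\ge 1$, not only $n\le 8$. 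The hypothesis $n\le 8$ is then used solely to quote $h(I_n)=1$, exactly as you note. So your route buys a cleaner and more general local step; the paper's route is more hands-on but self-contained at the level of matrix entries.
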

\begin{proof}
If we can prove for any prime $p$ that 
\[O((\Sigma_n)_p)=O((I_n)_p),\] 
then $O_\mathbb{A}(\Sigma_n)=O_\mathbb{A}(I_n)$ so that we have $h(\Sigma_n) = h(I_n) = 1$, which proves the proposition.

When $p\neq 2$, we have $(\Sigma_n)_p=(I_n)_p+\frac{1}{2}\vv = (I_n)_p$ so that $O((\Sigma_n)_p)=O((I_n)_p)$. 
Now, it suffices to show that $O((I_n)_2)\subseteq O((\Sigma_n)_2)$.
Let $\sigma\in O((I_n)_2)$ and for each $1\le i \le n$, we put $\sigma(\ee_i) = \sum_{k=1}^n t_{ik}\ee_k$ for some $t_{ik} \in \mathbb{Z}_2$. 
Note that 
\[
\sum_{k=1}^n t_{ik}^2=Q(\sigma(\ee_i))=1 \quad \text{and} \quad 
\sum_{k=1}^n t_{ik}t_{jk}=B(\sigma(\ee_i),\sigma(\ee_j))=0,
\]
for any $1\le i \neq j \le n$, since $\sigma\in O((I_n)_2)$.
We claim that $\sum_{i=1}^n t_{ik} \in 1+2\mathbb{Z}_2$ for any $1\le k \le n$ if the following two conditions hold:\\ [3pt]
\indent (1) $\sum_{k=1}^n t_{ik}^2\in 1+4\mathbb{Z}_2$ for any $1\le i \le n$, \\
\indent (2) $\sum_{k=1}^n t_{ik}t_{jk}\in 2\mathbb{Z}_2$ for any $1\le i \neq j \le n$.\\[3pt]
\noindent If we show this claim, then we have
\[
\sigma\left(\frac{1}{2}\vv\right) = \sum_{i=1}^n \frac{1}{2} \sigma(\ee_i) = \sum_{k=1}^n \left(\frac{1}{2}\cdot\sum_{i=1}^n t_{ik} \right) \ee_k \in (\Sigma_n)_2,
\]
which implies $\sigma \in O((\Sigma_n)_2)$ and therefore we prove the proposition.

We prove the claim using an induction argument on $n$.
When $n=1$, we have $t_{11}=\pm 1 \in 1+2\mathbb{Z}_2$.
Now, assume that $n>1$ and the above two conditions hold.
From the first condition, for each $1\le i \le n$, exactly one or five of $\{ t_{i1},...,t_{in} \}$ belong to $1+2\mathbb{Z}_2$ and all the other elements are in $2\mathbb{Z}_2$.
Assume, without loss of generality, that $t_{nn}\in 1+2\mathbb{Z}_2$ and $t_{nk}\in2\mathbb{Z}_2$ for all $k<n$.
Then, from the second condition, $t_{in}\in 2\mathbb{Z}_2$ for any $i<n$.
Hence, we have $\sum_{i=1}^n t_{in}\in1+2\mathbb{Z}_2$. 
Also, we have $\sum_{k=1}^{n-1} t_{ik}^2\in 1+4\mathbb{Z}_2$ and $\sum_{k=1}^{n-1} t_{ik}t_{jk}\in 2\mathbb{Z}_2$ for any $1\le i \neq j \le n-1$ 
so that we have $\sum_{i=1}^n t_{ik}\in1+2\mathbb{Z}_2$ by the induction hypothesis.
Therefore, we are left with the case when $n\ge 5$ and exactly five of $\{t_{i1},...,t_{in}\}$ belong to $1+2\mathbb{Z}_2$ for any $1\le i \le n$. 
One may easily show from the second condition that this can only happen when $n=6,8$ as well as $\sum_{i=1}^n t_{ik}\in1+2\mathbb{Z}_2$ for any $1\le k \le n$ in those cases.
\end{proof}

\begin{prop} \label{repbygen}
Let $K+\ww$ be a $\mathbb{Z}$-coset on a quadratic space $W$, and let $L+\vvv$ be a $\mathbb{Z}$-coset on a quadratic space $V$. Suppose that for each $p\in \Omega$, there exists a representation $\sigma_p : W_p \ra V_p$ such that $\sigma_p(K_p+\ww)\subseteq L_p+\vvv$. Then there exists a $\mathbb{Z}$-coset $M+\zz\in \text{gen}(L+\vvv)$ which represents $K+\ww$.
\end{prop}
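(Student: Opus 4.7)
The plan is to combine the Hasse principle for representations of quadratic spaces with a local patching argument based on Lemma \ref{finreplacelem}. First I would produce a global isometric embedding $\tau\colon W\to V$ from the hypothesized local ones; then at each place $p$ I would compare $\sigma_p$ with $\tau_p$ via Witt's extension theorem to obtain a $\mathbb{Z}_p$-coset in the local genus of $L_p+\vvv$ that contains $\tau(K_p+\ww)$; finally I would glue these local data via Lemma \ref{finreplacelem} to form the desired global $M+\zz$.

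Concretely, the existence of isometric embeddings $\sigma_p\colon W_p\to V_p$ at every place implies by the Hasse--Minkowski principle for representations of quadratic spaces that there is a global isometric embedding $\tau\colon W\to V$. For each $p\in\Omega$, Witt's extension theorem applied inside $V_p$ produces $\phi_p\in O(V_p)$ with $\phi_p\tau_p=\sigma_p$ on $W_p$; the $\mathbb{Z}_p$-coset $\phi_p^{-1}(L_p+\vvv)$ is then locally isometric to $L_p+\vvv$ and satisfies $\tau(K_p+\ww)\subseteq \phi_p^{-1}(L_p+\vvv)$, since $\phi_p(\tau(K_p+\ww))=\sigma_p(K_p+\ww)\subseteq L_p+\vvv$. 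Next I would identify a finite set $S\subseteq\Omega$ outside of which $\tau(K_p)\subseteq L_p$ and $\tau(\ww)-\vvv\in L_p$, and where moreover $\phi_p$ can be arranged to lie in the stabilizer of $L_p+\vvv$, so that $\phi_p^{-1}(L_p+\vvv)=L_p+\vvv$. Applying Lemma \ref{finreplacelem} to the prescribed data $\phi_p^{-1}(L_p+\vvv)$ at each $p\in S$ then yields a global $\mathbb{Z}$-coset $M+\zz$ on $V$ which agrees with $L_p+\vvv$ outside $S$; by construction $M+\zz\in\gen(L+\vvv)$, and the inclusion $\tau(K+\ww)\subseteq M+\zz$ follows by verifying the local inclusions place by place.

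The main obstacle I expect is the integral refinement at the good places: at all but finitely many $p$ one must show that the extension $\phi_p$ of $\sigma_p\tau_p^{-1}$ can be taken inside the stabilizer $O(L_p+\vvv)$. This amounts to an integral version of Witt extension for coset representations at primes where $L_p$ is essentially unimodular and $2$ is a unit, together with the observation that the two local embeddings $\tau_p$ and $\sigma_p$ of $K_p+\ww$ into $L_p+\vvv$ differ by an automorphism of $L_p+\vvv$ for all but finitely many $p$. Once this finite exceptional set $S$ is pinned down, the remainder is formal: the gluing via Lemma \ref{finreplacelem} and the local verification that $M+\zz$ lies in $\gen(L+\vvv)$ and contains $\tau(K+\ww)$ follow directly from the construction.
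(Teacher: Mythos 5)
Your overall route---a global embedding from the Hasse--Minkowski principle, Witt extension to produce $\phi_p\in O(V_p)$ carrying $\tau_p$ to $\sigma_p$, local modification of $L_p+\vvv$ at finitely many places, and gluing via Lemma \ref{finreplacelem}---is exactly the paper's argument. The one problem is the step you single out as the ``main obstacle,'' namely arranging $\phi_p\in O(L_p+\vvv)$ at almost all places: as written you leave this unresolved, and in fact it is not needed at all. Define $S$ simply as the set of places $p$ for which $\tau(K_p+\ww)\not\subseteq L_p+\vvv$. This set is automatically finite, because $\tau(K)\subseteq \frac{1}{N}L$ and $\tau(\ww)-\vvv\in\frac{1}{N}L$ for some nonzero integer $N$, so that for every $p\nmid N$ one has $\ww\in K_p$, $\vvv\in L_p$, and $\tau(K_p+\ww)=\tau(K_p)\subseteq L_p=L_p+\vvv$. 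At each $p\in S$ you prescribe the local coset $\phi_p^{-1}(L_p+\vvv)$, which lies in the same local isometry class as $L_p+\vvv$ and contains $\tau(K_p+\ww)$ since $\phi_p\tau_p=\sigma_p$; at each $p\notin S$ you prescribe nothing, the glued coset equals $L_p+\vvv$ there, and the inclusion $\tau(K_p+\ww)\subseteq L_p+\vvv$ holds by the very definition of $S$. The coset $\phi_p^{-1}(L_p+\vvv)$ is therefore only ever invoked at the finitely many bad places, and no stabilizer condition on $\phi_p$ enters anywhere. With this reorganization your argument closes up and coincides with the proof in the paper.
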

\begin{proof}
By virtue of the Hasse Principle, we may assume that $W\subseteq V$. 
By Witt's extension theorem, we may further assume that $\sigma_p\in O(V_p)$. 
Let $S$ be the set of $p\in\Omega$ such that $K_p+\ww \not \subseteq  L_p+\vvv$. 
Then $S$ is a finite set since $K_p+\ww = K_p \subseteq L_p = L_p+\vvv$ for almost all $p$. 
For each $p\in S$, let $L(p)=\sigma^{-1}_p(L_p)$ and $\xx_p = \sigma^{-1}_p(\vvv)$. 
By Lemma \ref{finreplacelem}, there exist $M+\zz\in \text{gen}(L+\vvv)$ such that 
\[
M_p+\zz=
\begin{cases} 
\sigma^{-1}_p(L_p+\vvv) & \text{if } p\in S,\\
L_p+\vvv & \text{if } p \in \Omega - S.
\end{cases}
\]
Therefore $K+\ww \subseteq M+\zz$, which proves the proposition.
\end{proof}

\begin{cor}\label{localglobal}
Let $K+\ww$ be a $\mathbb{Z}$-coset and let $n$ be a positive integer less than or equal to $8$. If $K+\ww$ is locally represented by $\Sigma_n$, then $K+\ww$ is represented by $\Sigma_n$.
\end{cor}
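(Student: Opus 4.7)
The statement is an immediate consequence of the two preceding results, so the plan is essentially a one-line reduction.

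My plan is to combine Proposition \ref{repbygen} with Proposition \ref{hIn}. Assume $K+\ww$ is locally represented by $\Sigma_n$, that is, for every $p\in\Omega$ there is an isometry $\sigma_p$ sending $K_p+\ww$ into $(\Sigma_n)_p = (I_n)_p + \tfrac{1}{2}\vv$. Applying Proposition \ref{repbygen} with $L+\vvv = \Sigma_n$, I obtain a $\mathbb{Z}$-coset $M+\zz \in \mathrm{gen}(\Sigma_n)$ that globally represents $K+\ww$.

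Now Proposition \ref{hIn} gives $h(\Sigma_n)=1$ for $1\le n\le 8$, so the genus of $\Sigma_n$ consists of a single isometry class. Therefore $M+\zz \cong \Sigma_n$, and composing the representation $K+\ww \to M+\zz$ with this isometry yields a representation $K+\ww \to \Sigma_n$.

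There is no real obstacle here; all the work is already packaged in the earlier propositions. The only thing one has to check is that the hypothesis of Proposition \ref{repbygen} is exactly the notion of local representation used in the statement, which is indeed the case by definition.
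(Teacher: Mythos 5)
Your proof is correct and is exactly the paper's argument: the paper also deduces the corollary directly from Propositions \ref{repbygen} and \ref{hIn}, and your write-up just spells out the one-class-genus step explicitly.
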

\begin{proof}
This is a direct consequence of Propositions \ref{hIn} and \ref{repbygen}.
\end{proof}

\section{Geometric approach of the problem} \label{geometricapproach}

In this section, we introduce some geometric approach of the problem via representations of $\mathbb{Z}$-cosets. 
For any $r\in\mathbb{N}$, let $I_r=\mathbb{Z} [\ee_1,...,\ee_r]$ be the $\mathbb{Z}$-lattice whose Gram matrix with respect to $\{\ee_1,...,\ee_r\}$ is the identity matrix.
As in Section \ref{preliminaries}, the vector $\ee_1+\cdots +\ee_r$ will be denoted by $\vv[r]$ and the $\mathbb{Z}$-coset $I_r + \frac{1}{2}\vv[r]$ will be denoted by $\Sigma_r$.
For any positive integer $n$, we define
\[\mathcal{K}_n := \left\{ K+\ww \mid \text{rank} (K)=n, \ \ww\in\mathbb{Q}K, \ \exists \sigma : K+\ww\ra \Sigma_r \right\}.\]
For any $K+\ww\in\mathcal{K}_n$, we define
\[g(K+\ww):=\text{min} \left\{r\in\mathbb{N} \mid  \exists \sigma : K+\ww\ra \Sigma_r \right\},\]
and we also define
\[g_\Delta'(n):= \text{max}\left\{g(K+\ww) \mid K+\ww \in \mathcal{K}_n\right\}.\]

\begin{prop}\label{correspondence}
For any positive integer $n$, we have $g_\Delta(n)=g_\Delta'(n)$ 
\end{prop}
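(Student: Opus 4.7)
The plan is to set up an explicit correspondence between complete quadratic polynomials $f\in\mathcal{F}_n$ and (isometry classes of) $\z$-cosets $K+\ww\in\mathcal{K}_n$ that carries $r(f)$ to $g(K+\ww)$; the equality $g_\Delta(n)=g_\Delta'(n)$ then follows by taking the maximum on each side.

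The key identity is $\Delta_r(\mathbf{y})=|2\mathbf{y}+\vv[r]|^2$, where $|\cdot|$ denotes the Euclidean norm on $\q I_r$. Given $f(\xx)=Q(\xx+\ww_f)\in\mathcal{F}_n$, whose $Q$ has Gram matrix $A$ in the standard basis, I would attach the $\z$-coset $K_f+\ww_f$, where $K_f=\z[\ee_1,\dots,\ee_n]$ is the $\z$-lattice on $\q^n$ with Gram matrix $A/4$ and $\ww_f$ is interpreted in $\q K_f$ by keeping the same coordinates. Expanding $\Delta_r(\xx T+\mathbf{c})=|2\xx T+(2\mathbf{c}+\vv[r])|^2$ and matching coefficients with $Q(\xx+\ww_f)=\xx A\xx^t+2\xx A\ww_f^t+\ww_f A\ww_f^t$, the existence of a representation $f=\Delta_r(\xx T+\mathbf{c})$ with $T\in M_{n\times r}(\z)$, $\mathbf{c}\in\z^r$ becomes equivalent to finding such $T,\mathbf{c}$ satisfying $A=4TT^t$ and $\ww_f T=\mathbf{c}+\tfrac{1}{2}\vv[r]$ (the constant-term identity is then automatic by completeness). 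Reading the first condition as saying that the $\q$-linear map $\psi\colon\q K_f\to\q I_r$, $\ee_i\mapsto T_i$ (the $i$-th row of $T$), is an isometry with $\psi(K_f)\subseteq I_r$, and the second as $\psi(\ww_f)\in\tfrac{1}{2}\vv[r]+I_r$, this is precisely a representation $K_f+\ww_f\ra\Sigma_r$. Hence $r(f)=g(K_f+\ww_f)$, so in particular $K_f+\ww_f\in\mathcal{K}_n$, yielding $g_\Delta(n)\le g_\Delta'(n)$.

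For the reverse inequality, given $K+\ww\in\mathcal{K}_n$, fix a $\z$-basis $\{\mathbf{b}_1,\dots,\mathbf{b}_n\}$ of $K$ and any representation $\psi\colon K+\ww\ra\Sigma_r$ realizing $r=g(K+\ww)$. Setting $T_i:=\psi(\mathbf{b}_i)\in I_r$ and $\mathbf{c}:=\psi(\ww)-\tfrac{1}{2}\vv[r]\in I_r$ gives integer data with which $f(\xx):=\Delta_r(\xx T+\mathbf{c})$ is a well-defined polynomial with integer coefficients, and expanding as in Step 1 identifies its Gram matrix as $A=4TT^t$ and its shift vector (in the coordinates defined by $\{\ee_i\}$) with the coordinates $(w_1,\dots,w_n)$ of $\ww=\sum w_i\mathbf{b}_i$. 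So $f\in\mathcal{F}_n$, and the assignment $\mathbf{b}_i\mapsto\ee_i$ is an isometry $K+\ww\cong K_f+\ww_f$ because $\psi$ itself is an isometry on $K$. Invoking the first paragraph, $r(f)=g(K_f+\ww_f)=g(K+\ww)$, and therefore $g_\Delta'(n)\le g_\Delta(n)$.

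No step is a genuine obstacle; the proof is essentially a definitional unpacking. The only point of care is the bookkeeping of the factor $4$ (the square of the $2$ inside $2y_i+1$), which is the reason the associated lattice $K_f$ carries Gram matrix $A/4$ rather than $A$, while the integrality condition $\mathbf{c}\in\z^r$ relative to $\vv[r]$ is precisely what encodes the $\tfrac{1}{2}\vv[r]$-shift defining $\Sigma_r$.
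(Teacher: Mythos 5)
Your proposal is correct and follows essentially the same route as the paper: match coefficients to get $A=4TT^t$ together with the shift condition $\ww_f T=\mathbf{c}+\tfrac12\vv[r]$, identify this data with a representation of the associated $\z$-coset by $\Sigma_r$, and run the dictionary in both directions. The one point stated a bit loosely is the ``only if'' direction of the shift identity: linear-coefficient matching only gives $T(2T^t\ww_f^t-(2\mathbf{c}+\vv[r])^t)=\mathbf{0}$, and you must also invoke the constant term (equivalently, evaluate at $\xx=-\ww_f$ and use positive definiteness of the sum of squares, as the paper does) to conclude the vector itself vanishes.
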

\begin{proof}
Let $f(\mathbf{x})=Q(\mathbf{x}+\ww_f)=(\mathbf{x}+\ww_f)4M(\mathbf{x}+\ww_f)^t$ be a complete quadratic polynomial in $\mathcal{F}_n$, where $\ww_f=(w_1,...,w_n)\in\mathbb{Q}^n$, and $4M$ is the Gram matrix of the quadratic part $Q$ of $f$. 
Hence, there exists a positive integer $r:=r(f)$, a matrix $T=(t_{ij})\in M_{n\times r}(\mathbb{Z})$, and a vector $\mathbf{c}=(c_1,...,c_n)\in\mathbb{Z}^n$ such that 
\[(\mathbf{x}+\ww_f)(4M)(\mathbf{x}+\ww_f)^t=f(\mathbf{x})=\Delta_r(\mathbf{x}T+\mathbf{c}).\]
By comparing the coefficients of both sides and by putting $\mathbf{x}=-\ww_f$, one may easily show that 
\begin{equation}\label{eq1}
M=T\cdot T^t \text{ and } -\ww_fT+\mathbf{c}+\frac{1}{2}(1,...,1)=\mathbf{0}\in \mathbb{Q}^r.
\end{equation}
Now let us consider a $\mathbb{Z}$-coset $K+\ww$ of a $\mathbb{Z}$-lattice $K=\mathbb{Z}[\dd_1,...,\dd_n]\cong M$, where $\ww=w_1\dd_1+\cdots+w_n\dd_n$ and 
define a linear map $\sigma : K\ra I_r$ by
\[\sigma(\dd_i) = \sum_{j=1}^r t_{ij}\ee_j.\]
By \eqref{eq1}, the map $\sigma$ is a representation of $\mathbb{Z}$-lattices satisfying $\sigma(\ww)-\frac{1}{2}\vv[r]\in I_r$, which implies that $\sigma : K+\ww \ra \Sigma_r$ is a representation of $\mathbb{Z}$-cosets. Thus, we have constructed a $\mathbb{Z}$-coset $K+\ww$ in $\mathcal{K}_n$ with $g(K+\ww)\le r=r(f)$.

Conversely, let $K+\ww$ be a $\mathbb{Z}$-coset in $\mathcal{K}_n$, where $K=\mathbb{Z}[\dd_1,...,\dd_n]$ and $\ww=w_1\dd_1+\cdots+w_n\dd_n$. 
Then there exist $g:=g(K+\ww)\in \mathbb{N}$ and a representation of $\mathbb{Z}$-cosets $\sigma : K+\ww \ra \Sigma_g$. 
Since $\sigma(\ww)-\frac{1}{2}\vv[g]\in I_g$, there are integers $c_1,...,c_g$ such that $\sigma(\ww)=\frac{1}{2}\vv[g]+c_1\ee_1+\cdots + c_g\ee_g$.
Also, let $T=(t_{ij})\in M_{n\times g}(\mathbb{Z})$ be the matrix such that $\sigma(\dd_i)=\sum_{j=1}^g t_{ij}\ee_j$ for each $1\le i \le n$. Then we have
\begin{equation}\label{eq2}
\begin{aligned}[t]
f(\mathbf{x})&:=4\cdot Q(x_1\dd_1+\cdots+x_n\dd_n+\ww)\\
&\, =4\cdot Q(\sigma(x_1\dd_1+\cdots+x_n\dd_n+\ww))\\
&\, =4\cdot Q\left([(x_1,...,x_n)T + (c_1,...,c_g)] \cdot (\ee_1,...,\ee_g)^t +\frac{1}{2}\vv[g] \right)\\
&\, =\Delta_g\left((x_1,...,x_n)T+(c_1,...,c_g)\right),
\end{aligned}
\end{equation}
where $\mathbf{x}=(x_1,...,x_n)$. Hence the complete quadratic polynomial $f(\mathbf{x})$ is represented by $\Delta_g(\mathbf{y})$. Therefore, we have constructed a quadratic polynomial $f(\mathbf{x})$ in $\mathcal{F}_n$ with $r(f)\le g=g(K+\ww)$. The proposition follows as a consequence.
\end{proof}

\begin{prop}
For any positive integer $n$, let
\[\mathcal{K}_n^* := \left\{ K+\ww \in \mathcal{K}_n \mid \ww = \frac{1}{2}\kk, \ \kk \text{ is a primitive vector of } K\right\}.\] 
Then we have 
\[g_\Delta(n)=\text{max}\left\{g(K+\ww)\mid K+\ww\in\mathcal{K}_n^* \right\}.\]
\end{prop}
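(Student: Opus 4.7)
The inclusion $\mathcal{K}_n^*\subseteq\mathcal{K}_n$, combined with Proposition~\ref{correspondence}, immediately yields $g_\Delta(n)\ge\max\{g(K+\ww)\mid K+\ww\in\mathcal{K}_n^*\}$, so the work lies entirely in the reverse inequality. My plan is to show that every $K+\ww\in\mathcal{K}_n$ can be \emph{enlarged} to some $M+\ww\in\mathcal{K}_n^*$ without changing the $g$-value, which would finish the proof.

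Given $K+\ww\in\mathcal{K}_n$, I would fix a representation $\sigma:K+\ww\ra\Sigma_r$ realizing $r=g(K+\ww)$. Doubling the coset condition $\sigma(\ww)-\tfrac{1}{2}\vv[r]\in I_r$ yields $\sigma(2\ww)\in I_r$, and this is the key observation that allows the lattice enlargement
\[
M:=K+\z(2\ww)\subseteq\q K,
\]
which is a rank-$n$ $\z$-lattice containing $K$. Extended $\q$-linearly, $\sigma$ maps $M$ into $I_r$ and still satisfies the coset condition, so it represents $M+\ww$ by $\Sigma_r$, giving $g(M+\ww)\le g(K+\ww)$. Conversely, since $K\subseteq M$, any representation of $M+\ww$ automatically restricts to one of $K+\ww$ of the same size, giving $g(K+\ww)\le g(M+\ww)$. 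Hence $g(M+\ww)=g(K+\ww)$.

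It remains to verify that $M+\ww\in\mathcal{K}_n^*$. Observe first that $\ww\notin M$: otherwise $M+\ww$ would be the trivial coset $M$ itself, and any representation $M\ra\Sigma_s$ would force $-\tfrac{1}{2}\vv[s]\in I_s$, which is impossible for $s\ge 1$. Since $2\ww$ is a nonzero element of $M$, we may write $2\ww=e\,\kk'$ with $\kk'\in M$ primitive and $e\in\n$. A parity argument now pins down $e$: if $e$ were even, then $\ww=(e/2)\kk'\in M$, contradicting the previous remark; hence $e$ is odd. Then $\ww-\tfrac{1}{2}\kk'=\tfrac{e-1}{2}\kk'\in M$, so the coset $M+\ww$ equals $M+\tfrac{1}{2}\kk'$ with $\kk'$ primitive in $M$, which places it in $\mathcal{K}_n^*$.

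The main conceptual step is recognizing that enlarging $K$ to $M=K+\z(2\ww)$ does not alter the $g$-value of the coset; once this ``$g$-preserving enlargement'' is in hand, the rest is routine bookkeeping about primitivity and the parity of $e$. The potential subtlety to watch for is simply that the extension of $\sigma$ to $M$ must still be an isometric embedding with the correct coset translate, but this follows automatically from extending $\sigma$ to the rational span $\q K=\q M$ and from the relation $\sigma(2\ww)\in I_r$.
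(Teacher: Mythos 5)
Your proof is correct and follows essentially the same route as the paper: writing $\ww=\frac{d}{2m_0}\kk$ with $\kk$ primitive, the paper's enlarged lattice $\tilde K=K+\z\frac{\kk}{m_0}$ coincides with your $M=K+\z(2\ww)$, and both arguments preserve $g$ by extending the optimal representation to the larger lattice and restricting back. Your derivation of the primitivity and the oddness of $e$ from the single observation $\sigma(2\ww)\in I_r$ is a slightly cleaner packaging of the paper's explicit coordinate computation (which deduces $m=2m_0$, $d$ odd, and $m_0\mid a_j$), but the underlying construction is identical.
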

\begin{proof}
Let $K+\ww$ be a $\mathbb{Z}$-coset in $\mathcal{K}_n$ and let $\sigma : K+\ww \ra \Sigma_g$ be a representation of $\mathbb{Z}$-cosets, where $g=g(K+\ww)$. 
Note that one can write $\ww=\frac{d}{m}\kk$, where $d,m$ are relatively prime positive integers and $\kk$ is a primitive vector of $K$. 
Since $\sigma(K)\subseteq I_g$, there are integers $a_1,...,a_g$ such that $\sigma(\kk)=a_1\ee_1+\cdots+a_g\ee_g$. 
Moreover, we have
\[\sigma\left(\frac{d}{m}\kk\right)-\frac{1}{2}\vv[g] = \sum_{j=1}^g \left(\frac{d}{m}a_j-\frac{1}{2}\right) \ee_j \in I_g.\]
Therefore, we have $\frac{d}{m}a_j-\frac{1}{2}\in\mathbb{Z}$, that is, $da_j-\frac{m}{2}\in m\mathbb{Z}$ for any $1\le j \le g$. 
Hence, there is an positive integer $m_0$ such that $m=2m_0$ and we have
\[d \equiv 1 \text{ (mod }2\text{)} \quad \text{and} \quad a_j \equiv 0 \text{ (mod }m_0\text{)}\text{ for any }1\le j \le g.\]
Thus, $Q(\kk)\in m_0^2\mathbb{Z}$ and $B(\kk,K)\subseteq m_0\mathbb{Z}$.

Since $\kk$ is a primitive vector of $K$, we may assume that $K=\mathbb{Z}[\kk,\kk_2,...,\kk_n]$ and consider the $\mathbb{Z}$-lattice $\tilde{K}=\mathbb{Z}[\frac{\kk}{m_0},\kk_2,...,\kk_n]$ in the same quadratic space $\mathbb{Q}K$.
Note that $\frac{\kk}{m_0}$ is a primitive vector of $\tilde{K}$.
One may easily check that
\[\sigma(\tilde{K})\subseteq I_g  \quad \text{and} \quad \sigma\left(\frac{d}{2}\cdot\frac{\kk}{m_0}\right)-\frac{1}{2}\vv[g] \in I_g,\]
which implies that $\sigma : \tilde{K}+\frac{d}{2}\left(\frac{\kk}{m_0}\right) \ra \Sigma_g$ is a representation of $\mathbb{Z}$-cosets. 
Therefore, we have 
\[\tilde{K}+\frac{d}{2}\cdot\frac{\kk}{m_0} = \tilde{K}+\frac{1}{2}\cdot\frac{\kk}{m_0}\in \mathcal{K}_n^* \quad \text{and} \quad g\ge \tilde{g}:=g\left(\tilde{K}+\frac{1}{2}\cdot\frac{\kk}{m_0}\right).\]
On the other hand, if we let $\tilde{\sigma} : \tilde{K}+\frac{1}{2}\left(\frac{\kk}{m_0}\right) \ra \Sigma_{\tilde{g}}$ be a representation of $\mathbb{Z}$-cosets, then by restricting $\tilde{\sigma}$ on $K+\ww$ we obtain a representation of $K+\ww$ by $\Sigma_{\tilde{g}}$. 
Thus we may conclude $g\le \tilde{g}$. Hence $g=\tilde{g}$, which proves the proposition.
\end{proof}

\begin{rmk} \label{rmk1}
(a) Let $K+\frac{1}{2}\ww$ be a $\mathbb{Z}$-coset in $\mathcal{K}^*_n$, where $K=\mathbb{Z}[\dd_1,...,\dd_n]$ and let $M$ be the Gram matrix corresponding to $K$ with respect to the basis $\{\dd_1,...,\dd_n\}$.
Then we may assume that $\ww = \dd_{i_1}+\cdots +\dd_{i_t}$, where $1\le i_1 <\cdots < i_t\le n$ and $t\ge 1$.
Let $R$ be either $\mathbb{Z}$ or $\mathbb{Z}_2$ and let us consider $K+\frac{1}{2}\ww$ as an $R$-coset. 
Let $\sigma : K+\frac{1}{2}\ww \ra \Sigma_r$ be a representation of $R$-cosets, that is, $\sigma : K \ra I_r$ and $\sigma(\frac{1}{2}\ww)-\frac{1}{2}\vv[r]\in I_r$.
Let $T=(t_{ij})$ be the $n\times r$ matrix over $R$ such that $\sigma(\dd_i)=\sum_{j=1}^r t_{ij}\ee_j$ for any $1\le i \le n$.
Then the assumption that $\sigma$ is a representation of $R$-cosets is equivalent to the following conditions:
\begin{equation}\label{eq3}
M=TT^t \quad \text{and} \quad \sum_{i\in \{i_1,...,i_t\}} t_{ij} \equiv 1 \Mod{2} \text{ for each }1\le j \le r.
\end{equation}
Conversely, a matrix $T\in M_{n\times r}(R)$ satisfying \eqref{eq3} induces the representation of $R$-cosets $\sigma : K+\frac{1}{2}\ww \ra \Sigma_r$ defined by $\sigma(\dd_i)=\sum_{j=1}^r t_{ij}\ee_j$ for each $1\le i \le n$. 
Therefore, we shall identify the above $\sigma$ with $T$.\vspace{0.3cm}\\
(b) Let $M$ be an $n\times n$ symmetric matrix over $R$, which is not necessarily non-degenerate.
We will sometimes say $M+\frac{1}{2}\ww$ is represented by $\Sigma_r$, denoted by $M+\frac{1}{2}\ww \ra \Sigma_r$, which means that there exists an $n\times r$ integral matrix $T$ which satisfies \eqref{eq3}. 
Suppose that there are two symmetric matrices $M_1,M_2$ over $R$ such that 
\[
M_i+\frac{1}{2}\ww \ra \Sigma_{r_i} \quad \text{for each } i=1,2.
\]
If we let $T_i$ be the corresponding $n\times r_i$ integral matrix for each $i$, then the $n\times (r_1+r_2)$ matrix $T=(T_1 \ T_2)$ together with the $n\times n$ matrix $M=M_1+M_2$ satisfies (\ref{eq3}), hence we have 
$M+\frac{1}{2}\ww\ra \Sigma_{r_1+r_2}$.
\end{rmk}

We can simply analyze the problem in the case when $n=1$.

\begin{prop} \label{gdelta1}
We have $g_\Delta(1)=10$.
\end{prop}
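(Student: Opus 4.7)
The plan is to translate the statement via the correspondence in Section \ref{geometricapproach} and then invoke the local-global machinery of Section \ref{preliminaries}. Elements of $\mathcal{K}_1^*$ are precisely the cosets $K + \frac{1}{2}\kk$ with $K = \mathbb{Z}\kk$; setting $m := Q(\kk) \in \mathbb{N}$ parametrizes them by positive integers. By Remark \ref{rmk1}(a) specialized to $n = 1$, a representation $K + \frac{1}{2}\kk \to \Sigma_r$ is exactly a tuple $(t_1, \dots, t_r) \in \mathbb{Z}^r$ satisfying $\sum_j t_j^2 = m$ with every $t_j$ odd, so $g(K + \frac{1}{2}\kk)$ equals the least $r$ for which $m$ is a sum of $r$ odd squares. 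Proving $g_\Delta(1) = 10$ thereby reduces to the purely elementary statement that every positive integer is a sum of at most $10$ odd squares and some positive integer requires exactly $10$.

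For the lower bound I take $m = 42$. Since an odd square is $\equiv 1 \pmod{8}$, the number of summands in any odd-squares representation of $42$ is $\equiv 2 \pmod{8}$. A short enumeration (the odd squares at most $42$ are $1, 9, 25$) rules out $r = 2$, so $r \ge 10$; the identity $42 = 5^2 + 3^2 + 8 \cdot 1^2$ realizes $r = 10$.

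For the upper bound the engine is the key lemma: every integer $n \equiv 4 \pmod{8}$ with $n \ge 4$ is a sum of four odd squares. Granting this, for $m \in \mathbb{N}$ with $m \equiv r \pmod{8}$ and $0 \le r \le 7$, I choose $k \in \{0, 1, \dots, 7\}$ with $k \equiv r - 4 \pmod{8}$ so that $m - k \equiv 4 \pmod{8}$; when $m - k \ge 4$, writing $m = (m - k) + k \cdot 1^2$ and applying the lemma gives $m$ as a sum of $k + 4$ odd squares. For $r \in \{4, 5, 6, 7\}$ this gives at most $7$ summands, and for $r \in \{0, 1, 2\}$ at most $10$. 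The residue $r = 3$ (which would naively require $k + 4 = 11$) is treated separately by Gauss's three-square theorem: $m$ is not of the form $4^a(8b + 7)$, hence $m = a^2 + b^2 + c^2$, and a mod-$8$ check (squares modulo $8$ lie in $\{0, 1, 4\}$, and only $1 + 1 + 1$ gives $3$) forces $a, b, c$ to be all odd. The few small $m$ for which $m - k < 4$, namely $m \in \{1, 2\}$, are dispatched by $m = m \cdot 1^2$.

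The key lemma is the place where the coset-level framework of Section \ref{preliminaries} pays off. By Corollary \ref{localglobal}, which applies because $h(\Sigma_4) = 1$, it suffices to show that the coset $\mathbb{Z}\kk + \frac{1}{2}\kk$ with $Q(\kk) = n$ is locally represented by $\Sigma_4$ at every place. At odd primes $p$, $(\Sigma_4)_p = (I_4)_p$ and $I_4$ represents every element of $\mathbb{Z}_p$, so local representability is automatic. At $p = 2$, I would exhibit the explicit local representation $\sigma_2(\kk) = (1, 1, 1, t_4)$ with $t_4 \in \mathbb{Z}_2$ a square root of $n - 3$; the hypothesis $n \equiv 4 \pmod{8}$ gives $n - 3 \in 1 + 8\mathbb{Z}_2 = (\mathbb{Z}_2^\times)^2$, guaranteeing an odd $t_4 \in \mathbb{Z}_2^\times$. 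The principal obstacle in the whole argument is precisely this $2$-adic square-class verification; once it is in hand, everything else is residue bookkeeping modulo $8$.
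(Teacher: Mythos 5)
Your proof is correct, and for the upper bound it takes a genuinely different route from the paper's. Both arguments share the same skeleton --- subtract an appropriate number of $1^2$'s so that the remainder lands in a residue class mod $8$ that is guaranteed to be a sum of few odd squares --- but the target class and the tool differ. The paper aims every residue $k$ with $3\le k\le 10$ at the class $3\pmod 8$ and uses only Legendre's three-square theorem plus the mod-$8$ observation that a sum of three squares congruent to $3$ must have all three summands odd; this handles all cases uniformly with $k$ summands and never invokes the coset machinery of Section \ref{preliminaries}. You instead aim at the class $4\pmod 8$ and prove the key lemma (every $n\equiv 4\pmod 8$, $n\ge 4$, is a sum of four odd squares) via Corollary \ref{localglobal} and $h(\Sigma_4)=1$, with a correct $2$-adic verification that $n-3\in 1+8\mathbb{Z}_2=(\mathbb{Z}_2^\times)^2$; you then still need Legendre's theorem separately for the residue $3$ class, since your generic recipe would give $11$ summands there. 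The net effect is the same bound of $10$ (attained at residue $2$), and your lower bound via $42$ coincides with the paper's. What the paper's route buys is economy --- one classical theorem suffices and the proof is self-contained at the elementary level; what yours buys is a demonstration that the class-number-one local-global principle for $\Sigma_r$ already resolves the one-variable case, which foreshadows how the higher-dimensional cases are handled later. One cosmetic remark: when invoking Corollary \ref{localglobal} you should also note the (trivial) condition at the infinite place, namely positivity of $n$.
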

\begin{proof}
Let $K+\frac{1}{2}\ww\in \mathcal{K}^*_n$.
As described in Remark \ref{rmk1} (a), we may assume that $\ww=\dd$, where $K=\mathbb{Z}[\dd]\cong \langle M \rangle$ for some positive integer $M$. 
Furthermore, finding a representation of $\mathbb{Z}$-cosets $\sigma : K+\frac{1}{2}\dd \ra \Sigma_r$ is equivalent to writing $M$ as a sum of $r$ squares of odd integers.\\
\indent We shall prove that every positive integer $M$ is a sum of at most $10$ squares of odd integers.
Clearly, $1$ and $2$ are sum of 1 and 2 odd squares, respectively. 
Now, let us assume that $M\equiv k \text{ (mod } 8\text{)}$ with $3\le k \le 10$.
Then, $M-(k-3)\equiv 3 \text{ (mod } 8\text{)}$ so that Legendre's three-square theorem implies that $M-(k-3)=t_1^2+t_2^2+t_3^2$ for some odd integers $t_1,t_2,t_3$. Since $k-3$ is a sum of $k-3$ squares of $1$, $M$ is a sum of $k$ odd squares. Thus $g_\Delta(1)\le 10$. On the other hand, every positive integers $M\equiv 2 \text{ (mod }8\text{)}$ which are not a sum of two squares, for example, the integer $42$, is a sum of $10$ odd squares. This proves the proposition.
\end{proof}

\section{Lemmas} \label{Lemmas}
In this section, we will introduce several lemmas.
We use the notations described in Remark \ref{rmk1} (a), so for a $\mathbb{Z}$-coset $K+\frac{1}{2}\ww \in \mathcal{K}_n^*$, we put $K=\mathbb{Z}[\dd_1,...,\dd_n]$ and $\ww = \dd_{i_1}+\cdots +\dd_{i_t}$, where $1\le i_1 <\cdots < i_t\le n$ and $t\ge 1$.
We begin with finding some necessary condition of a $\mathbb{Z}$-coset $K+\frac{1}{2}\ww \in \mathcal{K}_n^*$ to be represented by $\Sigma_r$.
\begin{lem}\label{lemneccond}
Let $K+\frac{1}{2}\ww$ be a $\mathbb{Z}$-coset in $\mathcal{K}_n^*$. If $K+\frac{1}{2}\ww$ is represented by $\Sigma_r$ for some positive integer $r$, then the following holds.
\begin{enumerate}[label=\upshape(\roman*), leftmargin=*, widest=iii]
	\item $Q(\kk) \equiv B(\ww,\kk) \Mod{2}$ for any $\kk \in K$.\label{neccond:1}
	\item $r\equiv Q(\ww) \Mod{8}$ and $r\le Q(\ww)$.\label{neccond:2}
	\item $r\le r_{K,\ww}$, where $r_{K,\ww}$ is the greatest positive integer satisfying 
	\[r_{K,\ww}\le Q(\dd_{i_1})+\cdots+Q(\dd_{i_t}) \text{ and } r_{K,\ww}\equiv Q(\ww) \Mod{8}.\]
\end{enumerate}
\end{lem}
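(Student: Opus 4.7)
The plan is to unpack the definitions via the matrix description from Remark \ref{rmk1}(a). A representation $\sigma : K + \frac{1}{2}\ww \ra \Sigma_r$ corresponds to an integer matrix $T = (t_{ij}) \in M_{n \times r}(\z)$ with $TT^t = M$ (the Gram matrix of $K$) and, writing $I = \{i_1, \dots, i_t\}$, such that for every column $j$ the sum $s_j := \sum_{k=1}^{t} t_{i_k, j}$ is odd. Equivalently, $\sigma(\ww) = \sum_{j=1}^{r} s_j \ee_j$ has all coordinates odd, so $\sigma(\ww) \equiv \vv[r] \Mod{2I_r}$. All three items of the lemma will fall out of this single observation.

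For (ii), since each $s_j$ is odd, $Q(\ww) = Q(\sigma(\ww)) = \sum_{j=1}^{r} s_j^2$ expresses $Q(\ww)$ as a sum of $r$ positive odd squares. Because each odd square is $\ge 1$ and $\equiv 1 \Mod{8}$, this immediately yields both $Q(\ww) \ge r$ and $Q(\ww) \equiv r \Mod{8}$.

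For (iii), the odd parity of $s_j = t_{i_1, j} + \cdots + t_{i_t, j}$ forces at least one of the entries $t_{i_1, j}, \dots, t_{i_t, j}$ to be odd in each column, so that $\sum_{k=1}^{t} t_{i_k, j}^2 \ge 1$. Summing over $j$ and using $\sum_{j=1}^{r} t_{i, j}^2 = Q(\sigma(\dd_i)) = Q(\dd_i)$ gives $Q(\dd_{i_1}) + \cdots + Q(\dd_{i_t}) \ge r$. Combined with the congruence from (ii), this produces $r \le r_{K, \ww}$.

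For (i), write $\kk = \sum_i a_i \dd_i \in K$, so that $\sigma(\kk) = \sum_j c_j \ee_j$ with $c_j = \sum_i a_i t_{ij}$. Using $c_j^2 \equiv c_j \Mod{2}$, one has $Q(\kk) = \sum_j c_j^2 \equiv \sum_j c_j \Mod{2}$. On the other hand, since $\sigma(\ww) \equiv \vv[r] \Mod{2I_r}$, the isometry of $\sigma$ gives $B(\ww, \kk) = B(\sigma(\ww), \sigma(\kk)) \equiv B(\vv[r], \sigma(\kk)) = \sum_j c_j \Mod{2}$, and comparing yields $Q(\kk) \equiv B(\ww, \kk) \Mod{2}$. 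No step presents a genuine obstacle: the substantive content is the single observation that $\sigma(\ww)$ has only odd coordinates, after which the congruence (i), the mod-$8$ relation and inequality in (ii), and the bound in (iii) each follow by a one-line argument.
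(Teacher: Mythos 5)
Your proposal is correct and follows essentially the same route as the paper: both arguments reduce to the matrix description of Remark \ref{rmk1}(a) and the single observation that every coordinate of $\sigma(\ww)$ is odd, from which (ii) and (iii) follow exactly as you write. Your treatment of (i) is a slight variant (you compare $Q(\kk)$ and $B(\vv[r],\sigma(\kk))$ directly for an arbitrary $\kk$, whereas the paper first proves the congruence for the basis vectors $\dd_i$ and then extends by bilinearity), but this is a cosmetic difference, not a different method.
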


\begin{proof}
Let $\sigma : K+\frac{1}{2}\ww \ra \Sigma_r$ be a representation of $\mathbb{Z}$-cosets and $T=(t_{ij})$ be the $n\times r$ integral matrix satisfying $\sigma(\dd_i)=\sum_{j=1}^r t_{ij}\ee_j$.
Then, 
\[
Q(\dd_i)=Q(\sigma(\dd_i))=\sum_{j=1}^r t_{ij}^2 \equiv \sum_{j=1}^r t_{ij} = B(\vv[r],\sigma(\dd_i))\equiv B(\ww,\dd_i) \Mod{2},
\]
for each $1\le i \le n$. Hence, for any $\kk=\sum_{i=1}^n k_i\dd_i \in K$, we have
\[Q(\kk) \equiv \sum_{i=1}^n k_i^2 Q(\dd_i) \equiv \sum_{i=1}^n k_i B(\ww,\dd_i)= B(\ww,\kk) \Mod{2}.\]

Now, we note that $\sigma(\ww)=\sum_{j=1}^r t_{\ww,j}\ee_j$, where $t_{\ww,j}=\sum_{i\in\{i_1,...,i_t\}}t_{ij}$ is an odd integer by the second condition of (\ref{eq3}), for each $1\le j \le r$.
Thus, we have
\[Q(\ww) = Q(\sigma(\ww)) = \sum_{j=1}^r t_{\ww,j}^2,\]
so that $Q(\ww)\equiv r \Mod{8}$ and $r\le Q(\ww)$. On the other hand, since not all elements in $\{t_{i_1j},...,t_{i_tj}\}$ are zero for each $1\le j \le r$, we have 
\[
r\le \sum_{j=1}^r  \left( \sum_{i\in\{i_1,...,i_t\}} t_{ij}^2 \right)=  
\sum_{i\in\{i_1,...,i_t\}}\left( \sum_{j=1}^r t_{ij}^2 \right)= Q(\dd_{i_1})+\cdots+Q(\dd_{i_t}).
\] Then, $r\le r_{K,\ww}$ follows from this with $r\equiv Q(\ww) \Mod{8}$.
\end{proof}

\begin{lem}\label{lem1}
Let $K$ be a $\mathbb{Z}_2$-lattice of rank $n$ and $\ww$ be a primitive vector of $K$. 
Suppose that $K+\frac{1}{2}\ww$ is represented by $\Sigma_{g'}$ over $\mathbb{Z}_2$ for some positive integer $g'$. Then we have $g' \equiv Q(\ww) \Mod{8}$.
Furthermore, we have
\[
K +\frac{1}{2}\ww\ra \Sigma_g
\]
for any positive integer $g\equiv Q(\ww) \Mod{8}$ satisfying
\[
g \ge 
\begin{cases} 
n+3 & \text{if }Q(\ww)\not\equiv 0 \Mod{4},\\ 
n+4 & \text{if }Q(\ww)\equiv 0 \Mod{4}. 
\end{cases}
\]
\end{lem}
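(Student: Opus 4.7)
The first assertion follows by rerunning the argument of Lemma \ref{lemneccond}\ref{neccond:2} over $\mathbb{Z}_2$: given a matrix $T = (t_{ij}) \in M_{n \times g'}(\mathbb{Z}_2)$ satisfying the two conditions in \eqref{eq3}, each entry $t_{\ww, j} := \sum_{i \in S} t_{ij}$ is an odd element of $\mathbb{Z}_2$ and
\[
Q(\ww) \;=\; \sum_{j=1}^{g'} t_{\ww, j}^2.
\]
Since any odd $t \in \mathbb{Z}_2$ satisfies $t^2 \equiv 1 \Mod{8}$, summing yields $Q(\ww) \equiv g' \Mod{8}$.

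For the sufficient direction, I would argue by induction on $n$, together with a case analysis based on the value of $Q(\ww)$ modulo $4$ and the 2-adic Jordan splitting of $K$. Since $\ww$ is primitive, extend it to a $\mathbb{Z}_2$-basis $\ww = \dd_1, \dd_2, \ldots, \dd_n$ of $K$; the task is to produce $T \in M_{n \times g}(\mathbb{Z}_2)$ with $T T^t = M$ (the Gram matrix in this basis) whose first row is entirely odd. In the base case $n = 1$, writing $t_{1j} = 1 + 2 u_j$ reduces the equation to $\sum_j u_j(u_j + 1) = (Q(\ww) - g)/4$, which lies in $2\mathbb{Z}_2$ by the congruence hypothesis. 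Hensel's lemma applied to $(2u + 1)^2 = 1 + 4\cdot(Q(\ww) - g)/4 \equiv 1 \Mod{8}$ supplies a single $u_j \in \mathbb{Z}_2$ solving the equation, so the base case holds for every $g \geq 1$ satisfying the required congruence.

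For the inductive step, the easy subcase is $Q(\ww) \in \mathbb{Z}_2^\times$: then $\mathbb{Z}_2 \ww$ is an orthogonal direct summand, $K = \mathbb{Z}_2 \ww \perp K'$, and I would combine the base case applied to $\mathbb{Z}_2 \ww + \frac{1}{2}\ww$ with the standard 2-adic universality that every rank-$m$ $\mathbb{Z}_2$-lattice embeds in $I_{m+3}$, applied to $K'$, gluing via Remark \ref{rmk1}(b); this accommodates any $g \geq n + 3$ by choosing $g_1, g_2$ with $g_1 + g_2 = g$, $g_1 \equiv Q(\ww) \Mod{8}$, $g_1 \geq 1$, and $g_2 \geq n + 2$. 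When $Q(\ww)$ is even, $\mathbb{Z}_2 \ww$ no longer splits off, so I would instead select an auxiliary $\dd_2$ for which the rank-$2$ sublattice $\mathbb{Z}_2[\ww, \dd_2]$ has a favorable structure (e.g., contains a unit-norm vector, or is modular-diagonal in a Jordan basis), handle the rank-$2$ coset $\mathbb{Z}_2[\ww, \dd_2] + \frac{1}{2}\ww$ directly, and then orthogonally peel off the remaining rank-$(n-2)$ complement. The main obstacle is the subcase $Q(\ww) \equiv 0 \Mod{4}$: there $\ww$ sits deep in the Jordan decomposition of $K$, and the odd-parity requirement on the first row of $T$ becomes tightly coupled to the integrality constraints imposed by the Gram equation; the extra ``$+1$'' in the bound $g \geq n + 4$ (versus $g \geq n + 3$) reflects the need for one additional ``slack'' column in $T$ to decouple these constraints, and verifying this decoupling is what I expect to be the technical heart of the proof.
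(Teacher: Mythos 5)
Your derivation of the congruence $g' \equiv Q(\ww) \Mod{8}$ and your treatment of the base case $n=1$ are correct and agree with the paper. The sufficiency argument, however, has a genuine gap already in the ``easy subcase'' $Q(\ww)\in\mathbb{Z}_2^\times$. Splitting $K=\mathbb{Z}_2\ww\perp K'$ and gluing a coset representation of $\mathbb{Z}_2\ww+\frac{1}{2}\ww$ by $\Sigma_{g_1}$ with a \emph{lattice} representation $K'\ra I_{g_2}$ does not yield a coset representation of $K+\frac{1}{2}\ww$: condition \eqref{eq3} requires the $\ww$-row of $T$ to be odd in \emph{every} one of the $g_1+g_2$ columns, while the columns carrying $K'$ have $0$ in the $\ww$-row. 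Remark \ref{rmk1}(b) only concatenates two \emph{coset} representations with the same $\ww$, so you would actually need $\bigl(\langle 0\rangle\perp K'\bigr)+\frac{1}{2}\ww\ra\Sigma_{g_2}$, i.e.\ an odd isotropic $\ww$-row orthogonal to the rows carrying $K'$ --- a far stronger statement than $K'\ra I_{g_2}$, and one forcing $g_2\equiv 0\Mod 8$, which would also ruin the bound $g\ge n+3$. This coupling of $\ww$ to every column is the central difficulty of the whole problem (it is why $g_\Delta(1)=10$ rather than $4$) and cannot be assumed away. Relatedly, ``every rank-$m$ $\mathbb{Z}_2$-lattice embeds in $I_{m+3}$'' is not available unconditionally at $p=2$ (the paper invokes O'Meara's clean $(m+3)$-bound only at odd primes), and your induction never uses the hypothesis that \emph{some} $g'$ works; the paper's proof shows that this hypothesis forces real structural constraints (the complement $N$ has no proper unimodular Jordan component; $\varepsilon\in 1+4\mathbb{Z}_2$ when $Q(\ww)\equiv 2\Mod 8$), without which the conclusion is simply false.

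The paper's route is different in a way worth internalizing: it enlarges the coset to the honest $\mathbb{Z}_2$-lattice $K+\mathbb{Z}_2[\frac{1}{2}\ww]$, scales by $2$, computes the Jordan splittings of $2\bigl(K+\mathbb{Z}_2[\frac{1}{2}\ww]\bigr)$ and of $2L_g$ (where $L_g=I_g+\mathbb{Z}_2[\frac{1}{2}\vv[g]]$) explicitly, and uses O'Meara's Theorem 3 to transfer the lattice representation from $g'$ to any admissible $g$; it then verifies that every lattice representation into $L_g$ automatically restricts to a coset representation $K+\frac{1}{2}\ww\ra\Sigma_g$, the required parities of the coefficient along $\frac{1}{2}\vv[g]$ falling out of $g\equiv Q(\ww)\Mod 4$ and $Q(K)\subseteq\mathbb{Z}_2$. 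Finally, the case $Q(\ww)\equiv 0\Mod 4$, which you correctly identify as the crux but leave open, is resolved by adjoining $a\in\{7,15\}$ to the $(1,1)$-entry via $a$ extra columns of $1$'s so that $Q(\tilde{\mathbf{w}})$ becomes odd, applying the odd case, and then cancelling the $a$ with one additional column containing $\varepsilon_a$ with $\varepsilon_a^2=-a$; that single extra column is precisely the source of the bound $n+4$.
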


\begin{proof}
For the sake of simplicity of notation, all the lattices, cosets, representations and matrices in the proof of this lemma are considered to be defined over $\mathbb{Z}_2$. 
Moreover, the $\mathbb{Z}_2$-lattice $I_r+\mathbb{Z}_2[\frac{1}{2}\vv[r]]$ associate with $\Sigma_r$ will be denoted by $L_r$ for any positive integer $r$ during the proof of this lemma.

Assume that there is a representation $\sigma' : K+\frac{1}{2}\ww \ra \Sigma_{g'}$ of $\mathbb{Z}_2$-cosets.
One may show that $Q(\ww) \equiv g' \Mod{8}$ holds by a similar argument used in the proof of \ref{neccond:2} of Lemma \ref{lemneccond}.

Now, we prove the second assertion.
Since $\sigma'(\frac{1}{2}\ww)\in L_{g'}$, the representation $\sigma'$ can be extended to a representation of $\mathbb{Z}_2$-lattices 
\[
\sigma' : K+\mathbb{Z}_2[\frac{1}{2}\ww] \ra L_{g'}.
\]
We shall divide the proof into three cases.

First, suppose that $Q(\ww)\equiv 1 \Mod{2}$. One may easily verify that 
\[
2\left( K + \mathbb{Z}_2[\frac{1}{2}\ww]\right)\cong \langle Q(\ww) \rangle \perp 2N,
\]
for some integral $\mathbb{Z}_2$-lattice $N$ and
\[
2L_{g} \cong 
\begin{cases}
\langle g \rangle \perp 4\left(\mathbb{H}\perp...\perp\mathbb{H} \right)& \text{if } g\equiv \pm 1 \Mod{8},\\
\langle g \rangle \perp 4\left(\mathbb{H}\perp...\perp\mathbb{H}\perp\mathbb{A} \right) & \text{if } g\equiv \pm3 \Mod{8},
\end{cases}
\]
where $\mathbb{H}\cong{\small	\begingroup
	\setlength\arraycolsep{3pt} \begin{pmatrix}0&1\\[-1pt]1&0 \end{pmatrix} \endgroup}$ and $\mathbb{A}\cong{\small 	\begingroup
	\setlength\arraycolsep{3pt} \begin{pmatrix}2&1\\[-1pt] 1&2 \end{pmatrix}\endgroup}$. 
It follows from Theorem 3 of \cite{OM1} that $N$ has no proper unimodular Jordan component since $2\left(K+\mathbb{Z}_2[\frac{1}{2}\ww]\right) \ra 2L_{g'}$.
On the other hand, the same theorem also implies that if $N$ has no proper unimodular Jordan component, then we have $2\left(K+\mathbb{Z}_2[\frac{1}{2}\ww]\right) \ra 2L_g$ for any integer $g\equiv Q(\ww) \Mod{8}$ with $g\ge n+3$. 
Therefore, there is a representation 
\[
\sigma : K+\mathbb{Z}_2[\frac{1}{2}\ww] \ra L_g.
\]
Let $\sigma(\frac{1}{2}\ww) = a_1\frac{\vv[g]}{2} + a_2 \ee_2 + \cdots + a_g\ee_g$, where the $a_i$'s are 2-adic integers.
Since 
\[
Q(\sigma(\ww)) = a_1^2g+4(a_2^2+\cdots+a_g^2+a_1(a_2+\cdots+a_g))\equiv a_1^2g \Mod{4},
\]
we have $g\equiv Q(\ww)=Q(\sigma(\ww)) \equiv a_1^2g \Mod{4}$.
Hence, we have
\[
a_1\equiv 1 \Mod{2} \quad \text{and}\quad \sigma(\ww)\equiv \vv[g] \Mod{2I_g}.
\]
Similarly, for any vector $\kk\in K$, let $\sigma(\kk) = b_1\frac{\vv[g]}{2} + b_2 \ee_2 + \cdots + b_g\ee_g$, where the $b_i$'s are 2-adic integers.
Since $Q(K)\subseteq \mathbb{Z}_2$ and 
\[Q(\kk)=Q(\sigma(\kk)) \equiv \frac{1}{4}b_1^2g\Mod{1},\] 
we have $b_1 \equiv 0 \Mod{2}$ so $\sigma(\kk)\in I_g$.
Hence, we have $\sigma(K)\subset I_g$ which implies that $\sigma : K+\frac{1}{2}\ww \ra \Sigma_g$ is a representation of $\mathbb{Z}_2$-cosets.

Next, suppose that $Q(\ww)\equiv 2 \Mod{4}$. 
One can verify that 
\[
2\left( K + \mathbb{Z}_2[\frac{1}{2}\ww]\right)\cong \langle Q(\ww) \rangle \perp 2N \text{  or  }
\langle Q(\ww) \rangle  \perp  \langle 2\varepsilon \rangle\perp 2N,\]
where $\varepsilon \in \mathbb{Z}_2^\times$, $N$ is an integral $\mathbb{Z}_2$-lattice and
\[
2L_g\cong 
\begin{cases}
\langle 2,2 \rangle \perp 4\left(\mathbb{H}\perp...\perp\mathbb{H} \right)& \text{if } g\equiv 2 \Mod{8},\\
\langle 6,14 \rangle \perp 4\left(\mathbb{H}\perp...\perp\mathbb{H}\perp\mathbb{A} \right) & \text{if } g\equiv 6 \Mod{8}.
\end{cases}
\]
In this case, $2\left(K+\mathbb{Z}_2[\frac{1}{2}\ww]\right) \ra 2L_{g'}$ implies that $N$ has no proper unimodular Jordan component and 
\[
\varepsilon\in
\begin{cases}1+4\mathbb{Z}_2 & \text{if } Q(\ww) \equiv 2 \Mod{8},\\
3+4\mathbb{Z}_2 & \text{if } Q(\ww) \equiv 6 \Mod{8}.
\end{cases}
\]
On the other hand, if the above conditions for both $N$ and $\varepsilon$ are satisfied, then we have $2\left(K+\mathbb{Z}_2[\frac{1}{2}\ww]\right) \ra 2L_g$ for any $g\equiv Q(\ww) \Mod{8}$ with $g\ge n+3$. 
Therefore, we have a representation of cosets $\sigma : K+\frac{1}{2}\ww \ra \Sigma_g$ by a similar reasoning to the case when $Q(\ww)\equiv 1 \Mod{2}$.

Finally, suppose that $Q(\ww) \equiv 0 \Mod{4}$. 
Since $\ww$ is a primitive vector of $K$, we may take $\{\ww,\dd_2,...,\dd_n\}$ as a basis for $K$. 
Let $T'=(t_{ij}')$ be the $n \times g'$ matrix over $\mathbb{Z}_2$ corresponding to the representation $\sigma'$, that is, 
\[(\sigma'(\ww),\sigma'(\dd_2),...,\sigma'(\dd_n) )^t= T'\cdot(\ee_1,...,\ee_{g'})^t.\]
Then $t_{1j}'\in 1+2\mathbb{Z}_2$ for any $1\le j \le g'$ (see Remark \ref{rmk1} (a)).
Now, we consider another $\mathbb{Z}_2$-lattice $\tilde{K}=\mathbb{Z}_2[\tilde{\mathbf{w}},\tilde{\mathbf{d}}_2,...,\tilde{\mathbf{d}}_n]$ whose Gram matrix with respect to the basis $\{\tilde{\mathbf{w}},\tilde{\mathbf{d}}_2,...,\tilde{\mathbf{d}}_n\}$ is $T'\cdot (T')^t +aE_{11}$, where $E_{11}$ is the $n\times n$ matrix with 1 in the $(1,1)$ position and $0$ elsewhere and an integer $a\in\{7,15\}$ is chosen to satisfy $d\tilde{K}_2\neq 0$. 
Then the $n\times (g'+a)$ matrix $\tilde{T}$ defined by
\[
\tilde{T} = \left(
\begin{array}{ccc|ccc}
&& & 1 & \cdots & 1               \\[-0em]
&&&0&\cdots&0                 \\ [-0.35em]
&T'&&&&                          \\  [-1.1em]
& && \vdots&\ddots&\vdots  \\ 
&&&0&\cdots&0 
\end{array}
\right)
\]
induces a representation of cosets $\tilde{\sigma} : \tilde{K}+\frac{1}{2}\tilde{\mathbf{w}} \ra \Sigma_{g'+a}$ (see Remark \ref{rmk1} (b)). 
Since $Q(\tilde{\mathbf{w}})\equiv 1\Mod{2}$, we may apply the result of the first case to conclude that there exists a representation of $\mathbb{Z}_2$-cosets $\sigma_0 : \tilde{K}+\frac{1}{2}\tilde{\mathbf{w}} \ra \Sigma_g$ for any integer $g\ge n+3$ satisfying $g\equiv Q(\ww)+7 \Mod{8}$. 
Let $T_0$ be the $n\times g$ integral matrix corresponding to $\sigma_0$, and define the $n\times (g+1)$ matrix $T$ as
\[
T = \left(
\begin{array}{ccc|c}
&& & \varepsilon_a              \\[-0em]
&&&0             \\ [-0.35em]
&T_0&                        \\  [-1.1em]
& && \vdots  \\ 
&&&0 
\end{array}
\right),
\]
where $\varepsilon_a \in \mathbb{Z}_2^\times$ such that $-a=\varepsilon_a^2$. Then $T$ induces a representation of $\mathbb{Z}_2$-cosets
\[\sigma : K+\frac{1}{2}\ww \ra \Sigma_{g+1}.\] 
This proves the lemma since $g+1\ge n+4$ and $g+1\equiv Q(\ww) \text{ (mod }8\text{)}$.
\end{proof}

\begin{lem}\label{lem2}
{\rm (1)} Let $K=\mathbb{Z}[\ww,\ww']$ be a positive definite integral $\mathbb{Z}$-lattice such that $Q(\ww)\equiv 5 \Mod{8}$ and $B(\ww,\ww') \equiv Q(\ww') \Mod{2}$.
Then the $\mathbb{Z}$-coset $K+\frac{1}{2}\ww$ is represented by $\Sigma_5$.

\noindent {\rm (2)} Let $K=\mathbb{Z}[\ww]\perp K'$ be a $\mathbb{Z}$-lattice such that $Q(\ww)=6$ and $K'$ is a positive definite even integral $\mathbb{Z}$-lattice of rank $2$.
Then $K+\frac{1}{2}\ww$ is represented by $\Sigma_6$.
\end{lem}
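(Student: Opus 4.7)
\emph{Plan.} Both parts are established via Corollary~\ref{localglobal}, which reduces global representability by $\Sigma_r$ (here $r=5$ and $r=6$, both $\leq 8$) to local representability at every place. At $\infty$ the claim is trivial by positive definiteness and $\rank K\leq r$. At an odd prime $p$ the coset condition is vacuous (since $\tfrac{1}{2}\vv[r]\in(I_r)_p$), so representation by $\Sigma_r$ reduces to $K\hookrightarrow I_r$ over $\mathbb{Z}_p$, which holds by the standard representation theory of $\mathbb{Z}_p$-lattices at odd primes (with $I_r$ unimodular of rank at least $\rank K+3$). Thus the substantive content lies at $p=2$.

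For (1), since $Q(\ww)\equiv 5\pmod 8$ is a unit in $\mathbb{Z}_2$, put $\ww'':=\ww'-\frac{B(\ww,\ww')}{Q(\ww)}\ww$; this gives $K\otimes\mathbb{Z}_2\cong\langle Q(\ww)\rangle\perp\langle Q(\ww'')\rangle$, and the parity hypothesis $B(\ww,\ww')\equiv Q(\ww')\pmod 2$ forces $Q(\ww'')\in 2\mathbb{Z}_2$. Construct $\sigma\colon K+\tfrac{1}{2}\ww\to\Sigma_5$ over $\mathbb{Z}_2$ by setting $\sigma(\ww)=(1,1,1,1,\alpha)$ with $\alpha\in\mathbb{Z}_2^{\times}$ satisfying $\alpha^2=Q(\ww)-4$; such $\alpha$ exists because $Q(\ww)-4\equiv 1\pmod 8$ is a square in $\mathbb{Z}_2^{\times}$. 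The orthogonal complement $L_{\alpha}$ of $\sigma(\ww)$ in $(I_5)_2$ is, by a short mod-$2$ calculation using oddness of $\alpha$, a rank-$4$ even unimodular $\mathbb{Z}_2$-lattice of discriminant $Q(\ww)\equiv 5\pmod 8$, hence isometric to $\mathbb{H}\perp\mathbb{A}$. Since $\mathbb{H}$ represents every element of $2\mathbb{Z}_2$, so does $L_{\alpha}$, so one may choose $\sigma(\ww'')\in L_{\alpha}$ of norm $Q(\ww'')$ orthogonal to $\sigma(\ww)$, completing the $2$-adic construction.

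For (2), write $K=\mathbb{Z}[\ww]\perp K'$ and take $\sigma(\ww)=\vv[6]\in I_6$ (norm $6$, all entries odd); then $\sigma(K')\subset\vv[6]^{\perp}=A_5$, so the problem reduces to embedding $K'\otimes\mathbb{Z}_2$ into $A_5\otimes\mathbb{Z}_2$. With $\{v_0,\ldots,v_4\}$ the standard basis of $A_5$, the vector $w:=v_0+v_2+v_4$ satisfies $Q(w)=6$ and $B(v_i,w)\in 6\mathbb{Z}_2$ for every $i$, so $\mathbb{Z}_2 w$ splits as an orthogonal direct summand. The complement is a rank-$4$ even unimodular $\mathbb{Z}_2$-lattice of discriminant $1$, hence isomorphic to $\mathbb{H}^2$, giving $A_5\otimes\mathbb{Z}_2\cong\mathbb{H}^2\perp\langle 6\rangle$. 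A direct case analysis on the Jordan types of an even rank-$2$ $\mathbb{Z}_2$-lattice ($\mathbb{H}$, $\mathbb{A}$, their scalings $2^k\mathbb{H}$, $2^k\mathbb{A}$, and diagonal $\langle 2a,2b\rangle$) then shows each such lattice embeds in $\mathbb{H}^2$, completing the required embedding. The main obstacle is this $2$-adic work in (2): $\sigma(\ww)$ is rigidly constrained (the number of columns cannot be enlarged because $6$ has a unique expression as six odd squares over $\mathbb{Z}$), and Lemma~\ref{lem1} cannot be invoked to increase $g$, so the Jordan-type verification and the splitting $A_5\otimes\mathbb{Z}_2\cong\mathbb{H}^2\perp\langle 6\rangle$ must be executed by hand.
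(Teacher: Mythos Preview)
Your proof is correct and shares the paper's overall architecture (reduce via Corollary~\ref{localglobal}; dispatch $p=\infty$ and odd $p$ trivially; concentrate on $p=2$), but your treatment at $p=2$ is genuinely different. The paper works by scaling: it computes $2\bigl(K_2+\mathbb{Z}_2[\tfrac{1}{2}\ww]\bigr)\cong\langle a\rangle\perp\langle 4a(ac-b^2)\rangle$, checks that the second entry lies in $8\mathbb{Z}_2$, and then appeals to O'Meara's local representation theorem exactly as in the corresponding case of the proof of Lemma~\ref{lem1}; part~(2) is declared analogous. This is efficient precisely because it recycles machinery already in place. You instead build the $2$-adic embedding by hand: for~(1) you pin down $\sigma(\ww)=(1,1,1,1,\alpha)$, identify its orthogonal complement in $(I_5)_2$ as $\mathbb{H}\perp\mathbb{A}$, and represent the even number $Q(\ww'')$ there; for~(2) you set $\sigma(\ww)=\vv[6]$, split $(A_5)_2\cong\mathbb{H}^2\perp\langle 6\rangle$ via the vector $w=v_0+v_2+v_4$, and embed $K'_2$ into $\mathbb{H}^2$ by a short Jordan-type case analysis. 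Your route is more elementary---it never touches O'Meara's Theorem~3---and gives a concrete picture of the local embeddings, at the cost of more ad hoc computation, particularly in~(2). One small correction of perspective: your remark that ``Lemma~\ref{lem1} cannot be invoked'' undersells the paper's actual strategy, which does not invoke Lemma~\ref{lem1} as a black box (that would indeed require first exhibiting some auxiliary $g'$) but rather replays its proof technique directly on the explicit Jordan data of $2\bigl(K_2+\mathbb{Z}_2[\tfrac{1}{2}\ww]\bigr)$.
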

\begin{proof}
Since the proofs for (1) and (2) are quite similar to each other, we only provide the proof of (1).
For the sake of convenience, put $a=Q(\ww)$, $b=B(\ww,\ww')$ and $c=Q(\ww')$.
By Corollary \ref{localglobal}, it is enough to show, for any prime $p$, that 
\[
K_p+\frac{1}{2}\ww \ra (\Sigma_5)_p.
\]
In case when $p\neq 2$,
$K_p+\frac{1}{2}\ww = K_p$ is represented by $(I_5)_p=(\Sigma_5)_p$ by Theorem 2 of \cite{OM1}. When $p=2$, we have, by hypothesis, that
\[
2(K_2+\mathbb{Z}_2[\frac{1}{2}\ww])\cong \langle a \rangle\perp\langle 4a(ac-b^2) \rangle \quad \text{and} \quad 4a(ac-b^2)\in 8\mathbb{Z}_2.
\]
Therefore, by following the argument of the first case of the proof of Lemma \ref{lem1} similarly, one may conclude $2(K_2+\mathbb{Z}_2[\frac{1}{2}\ww])$ is represented by $2((I_5)_2+\mathbb{Z}_2[\frac{1}{2}\vv[5]])$ so that $K_2+\frac{1}{2}\ww$ is represented by $(\Sigma_5)_2$.
\end{proof}

Let $n$ be a positive integer and let $i,j$ be integers such that $1\le i, j \le n$, let $E_{ij}$ be the $n\times n$ matrix with 1 in the $(i,j)$ position and $0$ elsewhere. 

\begin{lem}\label{matrixsplit}
Let $n\ge3$ be a positive integer and let $n_0$ be an integer such that $1\le n_0 \le n$. Let $A= {\rm diag}(a_1,...,a_n)$ be a diagonal matrix in $M_n(\z)$ and $S=(s_{ij})$ be a symmetric matrix in $M_n(\z)$.
Suppose that $A,S$ and $n_0$ satisfy the following conditions:
\begin{enumerate}[label=\upshape(\roman*), leftmargin=*, widest=iii]
	\item $a_i+s_{ii} \equiv s_{in_0} \Mod{2}$ for any $i\neq n_0$,\label{splitcond:1}
	\item $a_i>2n(n-1)(3n+2)$ for any $i$,\label{splitcond:2}
	\item $a_ia_j\ge4n^2|s_{ij}|^2$ for any $1\le i \le j \le n$.\label{splitcond:3}
\end{enumerate}
Then, $A+S$ is a positive definite symmetric matrix and we have
\[
K+\frac{1}{2}\ww \ra \Sigma_{6\cdot\frac{(n-1)(n-2)}{2}+5(n-1)+k_0}
\] 
for some integer $0\le k_0 \le 10$, where  $K=\z[\dd_1,...,\dd_n]$ whose Gram matrix with respect to $\{\dd_1,...,\dd_n\}$ is $A+S$ and $\ww=\dd_{n_0}$.
\end{lem}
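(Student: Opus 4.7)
My plan is to decompose the symmetric matrix $A+S$ as a sum of simpler symmetric integral $n\times n$ matrices, show that each summand together with the coset $\tfrac12\dd_{n_0}$ is represented by a small $\Sigma_r$ via Lemma~\ref{lem2} or Proposition~\ref{gdelta1}, and then concatenate these representations via Remark~\ref{rmk1}(b). Explicitly, I write
\[
A+S \;=\; M_0 \;+\; \sum_{i\ne n_0} M_i \;+\; \sum_{\{i,j\}\subset\{1,\ldots,n\}\setminus\{n_0\}} M_{\{i,j\}},
\]
where $M_0$ has a single nonzero entry $c_0$ at position $(n_0,n_0)$; each $M_i$ is supported on the indices $\{n_0,i\}$, with diagonal entries $\delta_i$ at $(n_0,n_0)$ and $\gamma_i$ at $(i,i)$, and off-diagonal $s_{n_0,i}$; and each $M_{\{i,j\}}$ is supported on $\{n_0,i,j\}$, with entry $6$ at $(n_0,n_0)$, even positive $u_{ij},u_{ji}$ at $(i,i),(j,j)$, off-diagonal $s_{ij}$ at $(i,j)$, and $0$ at $(n_0,i)$ and $(n_0,j)$. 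The off-diagonal entries of $A+S$ are thereby each placed in exactly one summand.

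Provided $u_{ij}u_{ji}>s_{ij}^2$, the rank-$3$ Gram matrix of $M_{\{i,j\}}$ splits orthogonally as $\langle 6\rangle\perp\bigl(\begin{smallmatrix}u_{ij}&s_{ij}\\s_{ij}&u_{ji}\end{smallmatrix}\bigr)$ with the second block an even positive definite rank-$2$ lattice, so Lemma~\ref{lem2}(2) gives $M_{\{i,j\}}+\tfrac12\dd_{n_0}\ra\Sigma_6$. Provided $\delta_i\equiv 5\pmod 8$, $\gamma_i\equiv s_{n_0,i}\pmod 2$, and $\delta_i\gamma_i>s_{n_0,i}^2$, Lemma~\ref{lem2}(1) gives $M_i+\tfrac12\dd_{n_0}\ra\Sigma_5$. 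For $c_0\ge 0$, Proposition~\ref{gdelta1} gives $M_0+\tfrac12\dd_{n_0}\ra\Sigma_{k_0}$ with $0\le k_0\le 10$. Concatenating the corresponding matrices via Remark~\ref{rmk1}(b) yields the claimed representation by $\Sigma_{6\binom{n-1}{2}+5(n-1)+k_0}$.

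The main obstacle is choosing $u_{ij},\gamma_i,\delta_i,c_0$ so that all of the above constraints hold simultaneously while the sum equals $A+S$. The strategy is: pick each $u_{ij}$ as a fixed even positive integer of size roughly $a_i/(n-1)$, which makes $u_{ij}u_{ji}>s_{ij}^2$ automatic from condition~(iii); define $\gamma_i$ by the forced row-sum equation $\gamma_i=a_i+s_{ii}-\sum_{j\ne i,n_0}u_{ij}$, its positivity ensured by condition~(ii) and its parity $\gamma_i\equiv s_{n_0,i}\pmod 2$ handed to us by condition~(i); then take a common value $\delta_i\equiv 5\pmod 8$ of size roughly $a_{n_0}/(n-1)$, large enough by condition~(ii) to satisfy $\delta_i\gamma_i>s_{n_0,i}^2$ but small enough that $c_0:=a_{n_0}+s_{n_0,n_0}-\sum_{i\ne n_0}\delta_i-6\binom{n-1}{2}\ge 0$. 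Positive definiteness of $A+S$ itself follows from a Cauchy--Schwarz argument using condition~(iii). The assumption $a_i>2n(n-1)(3n+2)$ leaves substantial slack in all the resulting inequalities, so the remaining bookkeeping, though somewhat lengthy, is routine.
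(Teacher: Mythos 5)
Your proposal is correct and follows essentially the same route as the paper: the paper likewise splits $A+S$ into rank-$3$ blocks with a $6$ at position $(n_0,n_0)$ handled by Lemma~\ref{lem2}(2), rank-$2$ blocks with $(n_0,n_0)$-entry $\equiv 5\pmod 8$ handled by Lemma~\ref{lem2}(1), and a residual multiple of $E_{n_0n_0}$ absorbed by $g_\Delta(1)=10$, with the same use of conditions (i)--(iii) for the parity, positivity, and determinant constraints. The only cosmetic difference is that you argue positive definiteness of $A+S$ by Cauchy--Schwarz rather than reading it off the decomposition, which is equally valid.
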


\begin{proof}
By condition \ref{splitcond:1}, we can write, for each $i\neq n_0$, $a_i+s_{ii}=\sum_{1\le j \le n, \,j\neq i} t_{ij}$ such that 
\[
t_{ij}\equiv 
\begin{cases} 0 \Mod{2} & \text{  if } j\neq n_0,\\
s_{in_0} \Mod{2} & \text{  if } j=n_0 ,
\end{cases}\quad \text{and} \quad t_{ij}\ge 2 \left\lfloor \frac{a_i+s_{ii}-1}{2(n-1)}\right\rfloor.
\]
Since $a_i\ge 2n|s_{ii}|$ by condition \ref{splitcond:2}, we have 
\[
a_i+s_{ii}\ge\frac{2n-1}{2n}a_i = \frac{n-1}{n}a_i+\frac{1}{2n}a_i.
\]
Hence, by condition \ref{splitcond:3}, one may verify that
\[
t_{ij}\ge2 \left\lfloor \frac{a_i+s_{ii}-1}{2(n-1)}\right\rfloor > \frac{a_i+s_{ii}-1}{n-1}-2 \ge \frac{a_i}{n}+\frac{a_i-2n(2n-1)}{2n(n-1)}>\frac{a_i}{n}.
\]
Similarly, we can write $a_{n_0}+s_{n_0n_0}=6\cdot\frac{(n-1)(n-2)}{2}+\sum_{1\le j \le n, \,j\neq n_0} t_{n_0j}+r_0$, where
\[
t_{n_0j}\equiv 5 \text{ (mod }8\text{)},\,\, t_{n_0j}\ge \left\lfloor \frac{a_{n_0}+s_{n_0n_0}}{n-1}-3(n-2)\right\rfloor-7,
\]
and $0\le r_0\le 7(n-1)$.
One may also show that $t_{n_0j}>\frac{a_{n_0}}{n}$. Therefore, by condition \ref{splitcond:3}, we have $t_{ij}t_{ji}>\frac{a_ia_j}{n^2}\ge 4|s_{ij}|^2\ge |s_{ij}|^2$ for any $i\neq j$.

Now, we can decompose $A+S$ as follows:
\[
\begin{array}{rcl}
A+S &=& \displaystyle\sum_i (a_i+s_{ii})E_{ii} + \sum_{1\le i,j \le n} s_{ij}E_{ij}\\
&= & \displaystyle\sum_{ \substack{i<j \\ i,j\neq n_0}}
(6E_{n_0n_0}+t_{ij}E_{ii}+t_{ji}E_{jj}+s_{ij}E_{ij}+s_{ji}E_{ji})\\
&& + \displaystyle\sum_{j\neq n_0}(t_{n_0j}E_{n_0n_0}+t_{jn_0}E_{jj}+s_{n_0j}E_{n_0j}+s_{jn_0}E_{jn_0}) + r_0E_{n_0n_0}.
\end{array}
\]
Hence, one may easily observe that $A+S$ is positive definite. Moreover, since $t_{ij}t_{ji}-|s_{ij}|^2>0$ for any $i\neq j$, we can apply Lemma \ref{lem2} so that each 
\[
(6E_{n_0n_0}+t_{ij}E_{ii}+t_{ji}E_{jj}+s_{ij}E_{ij}+s_{ji}E_{ji})+\frac{1}{2}\ww
\]
is represented by $\Sigma_6$ for any $i<j$ with $i,j\neq i_t$, and for each $j\neq n_0$,
\[(t_{n_0j}E_{n_0n_0}+t_{jn_0}E_{jj}+s_{n_0j}E_{n_0j}+s_{jn_0}E_{jn_0})+\frac{1}{2}\ww\]
is represented by $\Sigma_5$. 
Furthermore, $r_0E_{n_0n_0} + \frac{1}{2}\ww$ can be represented by $\Sigma_{k_0}$ for some $0\le k_0 \le 10$, for $g_\Delta(1)=10$.
Thus, 
\[(A+S) + \frac{1}{2}\ww \ra \Sigma_{6\cdot\frac{(n-1)(n-2)}{2}+5(n-1)+k_0}\]
for some $0\le k_0 \le 10$, which proves the lemma (see Remark \ref{rmk1} (b)).
\end{proof}
\vspace{0.1cm}

\section{Upper bound for $g_\Delta(n)$}\label{ubdsec}

In this section, we will derive an upper bound for $g_\Delta(n)$ and complete the proof of Theorem \ref{thm1}.
We begin by describing the ``balanced HKZ reduction'' introduced in Section 4 of \cite{BCIL} in terms of $\mathbb{Z}$-lattices. Let $U(n)$ be the group of upper triangular unipotent matrices in $M_n(\mathbb{R})$.
Let $K$ be a positive definite $\mathbb{Z}$-lattices of rank $n$ and let $\{\dd_1,...,\dd_n\}$ be a basis for $K$.
We say that a basis $\{\dd_1,...,\dd_n\}$ for $K$ is balanced HKZ-reduced if its corresponding Gram matrix $M$ is of the form $H[X]:=X^tHX$, where $X=(x_{ij})\in U(n)$ and $H=\text{diag}(h_1,...,h_n)$ satisfy the following two properties:\\

\noindent(1) $h_1=\mu(K)$ and $h_ih_j^{-1}\le \alpha(j-i)$ for any $1 \le i < j \le n$;\\
\noindent(2) $|x_{ij}|,|y_{ij}|\le c(j-i)$ for any $1 \le i \le j \le n$, where $X^{-1}=(y_{ij})$.\\

\noindent Here, $\alpha(m):=\sigma_{m+1}\prod_{k=2}^{m+1}\sigma_k^{\frac{1}{k-1}}$, $\sigma_k=4\pi^{-1}\Gamma\left(\frac{k}{2}+1\right)^{\frac{2}{k}}$ and $c(m)$ is the coefficient of $x^m$ in the Maclaurin series of $e^{\frac{x/2}{1-x}}$. Note that every positive definite $\mathbb{Z}$-lattice has a ``balanced HKZ-reduced'' basis (see \cite[Section 4]{BCIL}). On the other hand, we can bound the values $\alpha(j-i)$ (\cite[Corollary 2.5]{S}) as
\begin{equation}\label{bdh}
\alpha(j-i) \le \overline\alpha(n):=e^{\text{ln}(n+1)+(\text{ln}(n+1))^2}.
\end{equation}
Furthermore, there exists an absolute constant $D\ge 1$ such that 
\vspace{-0.1em}
\begin{equation}\label{bdx}
2c(j-i) \le \overline{c}(j-i):=De^{\sqrt{2(j-i)}},
\end{equation}
for any $1 \le i \le j \le n$  (\cite{P}, see also \cite[p.547]{Kn}). Note that $\overline{c}(m)$ is an increasing function of $m$.

\begin{prop}\label{mainprop}
Let  $n\ge3$ be an integer and let  
\[G(n):=144D^6 n^{12} e^{4(\text{ln}(n+1)+(\text{ln}(n+1))^2)} e^{(4+4\sqrt{2})\sqrt{n/2}},\]
where $D$ is the absolute constant in \eqref{bdx}. 
Then every $\z$-coset $K+\frac{1}{2}\ww$ satisfying condition \ref{neccond:1} of Lemma \ref{lemneccond} can be represented by $\Sigma_{6\cdot\frac{(n-1)(n-2)}{2}+5(n-1)+n+k_0}$ for some integer $0\le k_0 \le 10$, provided that $\mu(K)\ge G(n)$.
\end{prop}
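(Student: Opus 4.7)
The plan is to reduce the problem to Lemma~\ref{matrixsplit} by combining a balanced HKZ reduction of $K$ with a bounded adjustment of the Gram matrix that converts the representative $\ww$ into a single basis vector.

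First I fix a balanced HKZ-reduced basis $\{\dd_1,\ldots,\dd_n\}$ of $K$, so that its Gram matrix has the form $M = X^tHX$ with $H = \mathrm{diag}(h_1,\ldots,h_n)$, $h_1 = \mu(K)$, and $X \in U(n)$. Using the two defining inequalities of the reduction together with~\eqref{bdh} and~\eqref{bdx}, I derive the lower bound $m_{ii} \geq h_i \geq \mu(K)/\overline{\alpha}(n)$ and polynomial-in-$(n,\overline{\alpha}(n),\overline{c}(n))$ upper bounds on $m_{ii}$ and $|m_{ij}|$. Careful bookkeeping of the accumulated constants then shows that, provided $\mu(K) \geq G(n)$, the matrix $M$ satisfies both $m_{ii} > 2n(n-1)(3n+2)$ for each $i$, and the diagonal-dominance estimate
\[
m_{ii}\,m_{jj} \;\geq\; 4n^2\,m_{ij}^2\qquad(i \neq j);
\]
the explicit form of $G(n)$ is calibrated precisely so that this inequality closes.

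Next, by Remark~\ref{rmk1}(a), I may assume $\ww = \dd_{i_1} + \cdots + \dd_{i_t}$ with $1 \leq i_1 < \cdots < i_t \leq n$, and I set $n_0 := i_1$. To put the coset in the single-basis-vector form required by Lemma~\ref{matrixsplit}, I would subtract from $M$ a small integral symmetric correction, namely diagonal adjustments $\delta_i \in \{0,1\}$ chosen so that
\[
m_{ii} - \delta_i \;\equiv\; m_{i,n_0} \pmod{2} \qquad (i \neq n_0),
\]
together with paired off-diagonal adjustments whenever $\delta_i = 1$ for some $i \notin I(\ww) := \{i_1,\ldots,i_t\}$. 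By condition~\ref{neccond:1} of Lemma~\ref{lemneccond}, each $\delta_i$ is determined by the parity of $\sum_{l \geq 2} m_{i,i_l}$ (respectively $m_{i_k,i_k} - m_{i_k,i_1}$ for $i = i_k$ with $k \geq 2$), and all of the adjustments are of size $O(1)$. The resulting matrix $M'$ therefore inherits the size bounds of $M$ and still satisfies the hypotheses of Lemma~\ref{matrixsplit} with parameter $n_0$: hypothesis~(i) is built in by construction, hypothesis~(ii) follows from the diagonal lower bound, and hypothesis~(iii) is the diagonal-dominance estimate above.

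Lemma~\ref{matrixsplit} then supplies a representation of $M' + \tfrac{1}{2}\dd_{n_0}$ by $\Sigma_{6\binom{n-1}{2} + 5(n-1) + k_0'}$ for some $0 \leq k_0' \leq 10$. Appending the at most $n$ compensating columns guaranteed by Remark~\ref{rmk1}(b) -- each having either a single nonzero entry in a row $i \in I(\ww)$, or two nonzero entries in rows $i \notin I(\ww)$ and $j \in I(\ww)$, chosen so as to restore $M$ and to have odd $\ww$-parity -- yields the claimed representation of $K + \tfrac{1}{2}\ww$ by $\Sigma_{6\binom{n-1}{2} + 5(n-1) + n + k_0}$ for some $0 \leq k_0 \leq 10$. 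The main obstacle will be the explicit verification of the diagonal-dominance inequality from the balanced HKZ bounds: one must track the compositions of $\overline{\alpha}(n)$ and $\overline{c}(n)$ factors appearing in~\eqref{bdh} and~\eqref{bdx}, and confirm that the resulting threshold on $\mu(K)$ matches the stated $G(n)$, which is what produces the exponential-in-$\sqrt{n}$ shape of $G(n)$.
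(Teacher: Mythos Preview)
Your approach has a genuine gap: the diagonal-dominance estimate $m_{ii}m_{jj}\ge 4n^2 m_{ij}^2$ for a balanced HKZ-reduced Gram matrix $M$ is false in general, and cannot be rescued by taking $\mu(K)$ large. All of $m_{ii}$, $m_{jj}$, $m_{ij}$ scale linearly with the $h_k$, so the ratio $m_{ii}m_{jj}/m_{ij}^2$ is independent of $\mu(K)$. For a concrete obstruction take $n=3$, $H=\mathrm{diag}(h,h,h)$ and $X=I_3+\tfrac12 E_{12}$; this satisfies conditions (1) and (2) of the balanced HKZ reduction (note $c(1)=\tfrac12$), and one computes $m_{11}=h$, $m_{12}=\tfrac{h}{2}$, $m_{22}=\tfrac{5h}{4}$, hence $m_{11}m_{22}/m_{12}^2=5<36=4n^2$ for every $h$. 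Choosing $h$ an even multiple of $4$ makes $M$ integral and condition~\ref{neccond:1} of Lemma~\ref{lemneccond} trivially satisfied, while $\mu(K)=h$ can be made as large as one likes. Thus Lemma~\ref{matrixsplit} cannot be applied to $M$ (or to $M$ minus an $O(1)$ correction) directly.

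The paper's argument is structurally different. One first subtracts from $M$ a \emph{large} diagonal $A$ (each $a_k$ a fixed fraction of $h_k$, not $O(1)$), chosen so that $M-A$ is still positive semidefinite; then $M-A=W^tW$ for a real upper-triangular $W$, and rounding $W$ to an integral $P$ (with the parity constraint $p_{i,i_t}\equiv 1\Mod 2$) gives $M=P^tP+(A+S)$ with $S$ a bounded error. It is $A+S$, with its dominant diagonal $A$, that satisfies the hypotheses of Lemma~\ref{matrixsplit}; the summand $P^tP$ is represented by $\Sigma_n$, and this is where the ``$+n$'' in the target comes from. The general $\ww=\dd_{i_1}+\cdots+\dd_{i_t}$ is handled not by parity corrections but by the change of basis $\dd_{i_t}\mapsto\ww$, so that $\ww$ is a single basis vector throughout. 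Your alternative scheme of applying the lemma with $\ww=\dd_{n_0}$ and then appending columns would fail for a separate reason: the columns produced by Lemma~\ref{matrixsplit} have $t_{n_0,j}$ odd but carry no control on $t_{i_2,j},\ldots,t_{i_t,j}$, hence no control on the required $\ww$-parity $\sum_{l} t_{i_l,j}$.
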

\begin{proof}
The proof of this proposition is motivated by Section 6 of \cite{BCIL} and a modification of the arguments in there. 
The strategy of the proof is outlined as follows. We will take a specific basis for $K$ whose Gram matrix will be denoted by $M$.
Then we will take a diagonal matrix $A=\text{diag}(a_1,...,a_n)$, with all the $a_i$'s as large as possible, such that $M-A$ remains positive semidefinite. 
Then we will take $P\in M_n(\z)$ such that $P^tP$ approximates $M-A$ well and $P^tP+\frac{1}{2}\ww$ is represented by $\Sigma_n$. 
Write $M-A$ as $P^tP+S$, or equivalently, $M=P^tP+A+S$. We will show that $A$ and $S$ satisfy all conditions in Lemma \ref{matrixsplit}. 
As a result, $(A+S) +\frac{1}{2}\ww$ will be represented by $\Sigma_{6\cdot\frac{(n-1)(n-2)}{2}+5(n-1)+k_0}$ for some $0\le k_0 \le 10$. 
Hence we will conclude that $M+\frac{1}{2}\ww$ can be represented by $\Sigma_{6\cdot\frac{(n-1)(n-2)}{2}+5(n-1)+n+k_0}$ for some $1\le k_0\le 10$.\\

Let $\{\dd_1,...,\dd_n\}$ be a balanced HKZ-reduced basis for $K$ whose corresponding Gram matrix is $H[X]$, where $H$ is a diagonal matrix $\text{diag}(h_1,...,h_n)$,  $h_1=\mu(K)$, and $X\in U(n)$ which satisfy (1) and (2).
Let $\ww = \dd_{i_1}+\cdots +\dd_{i_t}$ for some $1\le i_1 <\cdots < i_t\le n$ and $t\ge 1$. 
With respect to the basis obtained by replacing $\dd_{i_t}$ with $\ww$, $K\cong M:=H[X']$ where $X':=XT$ and $T\in U(n)$ is defined as $T:=I_n + E_{i_1i_t} + \cdots + E_{i_{t-1}i_t}$.
We note that $T^{-1}=I_n - (E_{i_1i_t} + \cdots + E_{i_{t-1} i_t})$. 
If we put $X'=(x_{ij}')$ and $(X')^{-1}=T^{-1}X^{-1}=(y_{ij}')$, then by a straight forward computation using (\ref{bdx}) we obtain
\begin{equation}\label{bdxy}
|x_{ij}'| \le n\bar{c}(j-i), \quad |y_{ij}'| \le \overline{c}(j-i) \quad \text{ for any } 1\le i<j \le n.
\end{equation}

Now, for any $1\le k \le n$, we let 
\[
a_k:= \left\lfloor \frac{1}{n^2}\overline{\alpha}(n)^{-1} \overline{c}(n-k)^{-2}h_k \right\rfloor,
\]
and let $A:=\text{diag}(a_1,...,a_n)$.
Following the same argument used in the proof of Proposition 6.3 of \cite{BCIL}, we can find an upper triangular matrix $N=(n_{ij})$ such that 
\[
I_n-A[X'^{-1}\sqrt{H}^{-1}]=N^tN.
\]
Note that $|n_{ij}|\le 1$ for any $i\le j$, since $1-\sum_{i\le j}|n_{ij}|^2$ is the $(j,j)$ entry of $I_n-N^tN=A[X'^{-1}\sqrt{H}^{-1}]$ which is positive semidefinite.

Let $W=(w_{ij})$ be the upper triangular matrix $N\sqrt{H}X'$ in $M_n(\mathbb{R})$. Then $W^tW=M-A$. 
We can take an integral matrix $P=(p_{ij})$ satisfying
\[
|w_{ij}-p_{ij}|\le 1 \quad \text{and} \quad p_{ii_t}\equiv 1 \Mod{2}
\]
for any $1\le i,j\le n$. 
Let $Q=(q_{ij}):=W-P$. 
Then
\[
P^tP = (W-Q)^t(W-Q) = M-A-Q^tW-W^tQ+Q^tQ,
\]
hence $M=P^tP+A+S$, where $S=(s_{ij}):=Q^tW+W^tQ-Q^tQ$ which is an integral symmetric matrix.
We note that $P^tP+\frac{1}{2}\ww$ is represented by $\Sigma_n$. Therefore, as outlined at the beginning of the proof, it is enough to show that $A$ and $S$ satisfy the conditions in Lemma \ref{matrixsplit} with $n_0=i_t$.

To verify the first condition, let $M=(m_{ij})$ and note that $m_{ii} \equiv m_{ii_t} \Mod{2}$ for any $i$, by the hypothesis of this proposition. 
Also, the $(i,i)$ and the $(i,i_t)$ entries of $P^tP$ have the same parity for any $i$, by the construction of $P$. Since $A+S=M-P^tP$, the first condition in Lemma \ref{matrixsplit} is satisfied.

Now we estimate the lower bound of $a_i$.
By the hypothesis, we have
\[
\overline{\alpha}(n)h_i\ge h_1 = \mu(K) \ge G(n)= 144D^6 n^{12} e^{4(\text{ln}(n+1)+(\text{ln}(n+1))^2)} e^{(4+4\sqrt{2})\sqrt{n/2}}.
\]
Combining this with the fact that $\overline{c}(n-j)^2\overline{c}(j)^2 = D^4e^{2\sqrt{2}(\sqrt{n-j}+\sqrt{j})}$ is maximized at $j=\frac{n}{2}$, we have 
\begin{equation}\label{lbdalphah}
\overline{\alpha}(n)h_i \ge  144n^{12} \overline{\alpha}(n)^4\overline{c}(n)^2\overline{c}(n-j)^2\overline{c}(j)^2,
\end{equation}
for any $1\le j\le n$. Hence, we have $\frac{1}{n^2}\overline{\alpha}(n)^{-1} \overline{c}(n-i)^{-2}h_i\ge n^{10} \ge1$, so that
\begin{equation}\label{bda}
a_i= \left\lfloor \frac{1}{n^2}\overline{\alpha}(n)^{-1} \overline{c}(n-i)^{-2}h_i \right\rfloor \ge \frac{1}{2n^2}\overline{\alpha}(n)^{-1} \overline{c}(n-i)^{-2}h_i,
\end{equation}
and, especially $a_i\ge n^{10}>2n(n-1)(3n+2)$ for any $i$. This proves that the second condition in Lemma \ref{matrixsplit} is satisfied.

On the other hand, using \eqref{bdh}, \eqref{bdxy}, and the fact that $|n_{ij}|\le1$, one may obtain that
$|w_{ij}|\le n^2 \overline{c}(j)(\overline{\alpha}(n)h_j)^\frac{1}{2}$ for any $1\le i \le j \le n$. Furthermore, since $S=Q^tW+W^tQ-Q^tQ$ and $|q_{ij}|\le 1$, one may show for each $1\le i \le j \le n$ that
\begin{equation}\label{bds}
|s_{ij}|\le 2n^3 \overline{c}(j)(\overline{\alpha}(n)h_j)^\frac{1}{2}+n \le 3n^3\overline{c}(j)(\overline{\alpha}(n)h_j)^\frac{1}{2}.
\end{equation}
Thus, by \eqref{lbdalphah}, \eqref{bda}, and \eqref{bds}, for any $1\le i \le j \le n$, we have
\[
\begin{array}{rcl}
\dfrac{a_ia_j}{n^2}&\ge&\dfrac{1}{4n^6} \overline{\alpha}(n)^{-2}\overline{c}(n-i)^{-2}\overline{c}(n-j)^{-2}h_ih_j \\
&\ge&4|s_{ij}|^2\dfrac{\overline\alpha(n)h_i}{144n^{12}\overline{c}(j)^2\overline{c}(n-j)^2\overline{c}(n-i)^2\overline{\alpha}(n)^4}\\
&\ge&4|s_{ij}|^2.
\end{array}
\]
This implies that the third condition in Lemma \ref{matrixsplit} is satisfied, hence we complete the proof.
\end{proof}

\begin{prop}\label{proprecgn}
For any positive integer $n\ge3$,
\[g_\Delta(n)\le \text{max} \left\{ g_\Delta(n-1)+G(n), \, 3n^2-3n+11 \right\},\]
where $G(n)$ is the function defined in Proposition \ref{mainprop}.
\end{prop}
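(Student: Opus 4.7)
The plan is a case split on the minimum $\mu(K)$ of the $\mathbb{Z}$-lattice $K$ underlying a fixed coset $K + \frac{1}{2}\ww \in \mathcal{K}_n^*$. Since such a coset is representable by some $\Sigma_r$ by definition of $\mathcal{K}_n^*$, Lemma \ref{lemneccond}(i) forces $Q(\mathbf{k}) \equiv B(\ww, \mathbf{k}) \Mod{2}$ for every $\mathbf{k} \in K$, so the parity hypothesis of Proposition \ref{mainprop} is automatic.

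If $\mu(K) \ge G(n)$, then Proposition \ref{mainprop} applies directly and yields
\[
g(K + \tfrac{1}{2}\ww) \le 6 \cdot \frac{(n-1)(n-2)}{2} + 5(n-1) + n + k_0 = 3n^2 - 3n + 1 + k_0 \le 3n^2 - 3n + 11
\]
for some $0 \le k_0 \le 10$, matching the second term of the maximum. Otherwise $\mu(K) < G(n)$, and the target becomes $g(K + \frac{1}{2}\ww) \le g_\Delta(n-1) + G(n)$. If in addition $Q(\ww) < G(n)$, Lemma \ref{lemneccond}(ii) immediately gives $g(K + \frac{1}{2}\ww) \le Q(\ww) < G(n)$. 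So assume $Q(\ww) \ge G(n)$. Then a primitive vector $\mathbf{k}$ realizing $Q(\mathbf{k}) = \mu(K)$ must be distinct from $\pm\ww$, and the plan is to split off the $\mathbf{k}$-direction and reduce to dimension $n-1$.

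For the splitting, I would select a basis $\{\mathbf{d}_1, \ldots, \mathbf{d}_n\}$ of $K$ with $\ww$ a sum of a subset of basis vectors (as in Remark \ref{rmk1}(a)) and $\mathbf{k}$ appearing both as a basis vector and in that sum; this should be achievable from the primitivity of $\mathbf{k}$ together with the freedom to choose a $\mathbb{Z}$-complement of $\mathbb{Z}\ww$ containing $\mathbf{k}$, possibly after replacing $\mathbf{k}$ by $\mathbf{k} + c\ww$ so that $\{\ww, \mathbf{k}\}$ extends to a basis. With such a basis in place, decompose the Gram matrix as $M = M_1 + M_2$ into positive semidefinite integer matrices, where $M_1$ is concentrated in the $\mathbf{k}$-block and absorbs the cross terms $B(\mathbf{k}, \mathbf{d}_j)$, while $M_2$ has vanishing $\mathbf{k}$-row and column so that $M_2 + \frac{1}{2}\ww$ naturally identifies with a coset $K_0 + \frac{1}{2}\ww_0$ in $\mathcal{K}_{n-1}^*$ (with $\ww_0 = \ww - \mathbf{k}$ primitive in the rank-$(n-1)$ complement $K_0$, which holds because the original sum for $\ww$ has length at least $2$ under the assumption $Q(\ww) \ge G(n) > Q(\mathbf{k})$). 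Proposition \ref{gdelta1} together with Lemma \ref{lem2} would then produce $M_1 + \frac{1}{2}\ww \to \Sigma_{r_1}$ with $r_1 \le G(n)$; the induction hypothesis would provide $M_2 + \frac{1}{2}\ww \to \Sigma_{r_2}$ with $r_2 \le g_\Delta(n-1)$; and Remark \ref{rmk1}(b) would concatenate them into a representation by $\Sigma_{r_1 + r_2}$.

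The hard part is the explicit matrix construction. A naive choice $M_1 = Q(\mathbf{k})\, E_{ii}$ leaves $M_2$ indefinite because of the off-diagonal entries $B(\mathbf{k}, \mathbf{d}_j)$, so $M_1$ must be enlarged to absorb those cross terms while remaining representable by $O(G(n))$ odd squares and respecting the parity imposed by $\ww$. Simultaneously, the reduced coset $K_0 + \frac{1}{2}\ww_0$ must lie in $\mathcal{K}_{n-1}^*$, which via Lemma \ref{lemneccond}(i) requires $B(\mathbf{k}, K_0) \subseteq 2\mathbb{Z}$; if this fails in the initial basis, $\mathbf{k}$ or the complement must be modified. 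This matrix-combinatorial step, in the spirit of Lemma \ref{matrixsplit} but adapted to a single small diagonal entry, is the technical heart of the remaining case.
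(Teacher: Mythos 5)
Your first case ($\mu(K)\ge G(n)$) is correct and is exactly what the paper does, and your observation that condition \ref{neccond:1} of Lemma \ref{lemneccond} is automatic for cosets in $\mathcal{K}_n^*$ is also right, as is the easy sub-case $Q(\ww)<G(n)$ via Lemma \ref{lemneccond}\ref{neccond:2}. The problem is the remaining case $\mu(K)<G(n)$, $Q(\ww)\ge G(n)$: you propose a Gram-matrix splitting $M=M_1+M_2$ and explicitly leave its construction undone, and this is not a routine gap. Concretely: (a) for $M_2$ to be positive semidefinite with vanishing $\mathbf{k}$-row and column, $M_1$ must dominate the orthogonal projection onto $\mathbf{k}$, which forces diagonal entries of size roughly $B(\mathbf{k},\dd_j)^2/\mu(K)$ at \emph{every} index $j$; even with a Minkowski-reduced basis this makes $\mathrm{tr}(M_1)$ of order $n\mu(K)$, and nothing in the paper bounds the number of odd squares needed for $M_1+\frac{1}{2}\ww$ by $G(n)$ (Lemma \ref{lem2} and Proposition \ref{gdelta1} handle only rank $\le 2$ blocks of controlled size), so you would not recover the stated bound $g_\Delta(n-1)+G(n)$; (b) both pieces must separately satisfy the parity constraints of \eqref{eq3} and condition \ref{neccond:1}, an integrality requirement your sketch does not engineer; (c) arranging $\mathbf{k}$ to lie in the support of $\ww$ changes the basis and hence the cross terms you are trying to control.

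The paper's argument for this case is entirely different and much lighter: it works with the polynomial formulation rather than the Gram matrix. Fix a representation $f(\mathbf{x})=\sum_{j=1}^r 4\bigl(L_j(\mathbf{x})+c_j+\frac{1}{2}\bigr)^2$ of the associated complete polynomial and a balanced HKZ-reduced basis with $Q(\dd_1)=\mu(K)$. Comparing coefficients of $x_1^2$ gives $\sum_{j=1}^r b_j^2=\mu(K)<G(n)$, where $b_j$ is the coefficient of $x_1$ in $L_j$; hence at most $\lfloor G(n)\rfloor$ of the $b_j$ are nonzero. The squares with $b_j=0$ do not involve $x_1$, and (using $L_j(\mathbf{x})+c_j+\frac{1}{2}=L_j(\mathbf{x}+\ww_f)$) they constitute a complete quadratic polynomial in $n-1$ variables represented by a sum of odd squares, which can therefore be re-represented by at most $g_\Delta(n-1)$ odd squares; concatenating with the at most $\lfloor G(n)\rfloor$ squares involving $x_1$ gives the bound. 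If you want to salvage your approach you would essentially have to reprove a variant of Lemma \ref{matrixsplit}; the counting argument above is what yields the exact constant $g_\Delta(n-1)+G(n)$.
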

\begin{proof}
Let $K+\frac{1}{2}\ww$ be a $\mathbb{Z}$-coset in $\mathcal{K}_n$. If $\mu(K)\ge G(n)$, then by Proposition \ref{mainprop}, $K+\frac{1}{2}\ww$ is represented by $\Sigma_g$ for some integer less than or equal to 
\[6\cdot\frac{(n-1)(n-2)}{2}+5(n-1)+n+10=3n^2-3n+11.\]
\indent Suppose that $\mu(K)<G(n)$. 
We may assume that $K+\frac{1}{2}\ww$ is represented by $\Sigma_r$ for some $r$. 
Furthermore we may also assume that $r\ge G(n)$.
Let $K=\mathbb{Z}[\dd_1,...,\dd_n]$, $\frac{1}{2}\ww=w_1\dd_1+\cdots+w_n\dd_n$ and define 
\[f(x_1,...,x_n):=4\cdot Q(x_1\dd_1+\cdots+x_n\dd_n+\frac{1}{2}\ww).\]
We may further assume that $\{\dd_1,...,\dd_n\}$ is a balanced HKZ reduced basis for $K$ so that $Q(\dd_1)=\mu(K)$. As is described in \eqref{eq2}, there are $r$ linear forms $L_1(x_1,...,x_n),...,L_r(x_1,...,x_n)$ over $\mathbb{Z}$ and integers $c_1,...,c_r$ such that 
\[
f(x_1,...,x_n) = \sum_{j=1}^r 4\cdot\left(L_j(x_1,...,x_n)+c_j+\frac{1}{2}\right)^2.
\]
Since $f(-w_1,...,-w_n)=0$, we have $-L_j(w_1,...,w_n)+c_j+\frac{1}{2}=0$ for any $1\le j \le r$.
Hence, for any $1\le j \le r$, we have
\[L_j(x_1,...,x_n)+c_j+\frac{1}{2} = L(x_1+w_1,...,x_n+w_n).\]
If $b_1,...,b_r$ are the coefficients of $x_1$ in $L_1,...,L_r$ respectively, then at most $\lfloor G(n) \rfloor$ of them are nonzero. Thus, without loss of generality, we can write
\[
\begin{array}{rcl}
f(x_1,...,x_n) &= &\displaystyle\sum_{j=1}^{\lfloor G(n)\rfloor} 4\cdot\left(L_j(x_1,...,x_n)+c_j+\frac{1}{2}\right)^2\\
&&+\displaystyle\sum_{j=\lfloor G(n)\rfloor+1}^r 4\cdot\left(L_j(0,x_2,...,x_n)+c_j+\frac{1}{2}\right)^2.
\end{array}
\]
Note that $(L_j(0,x_2,...,x_n)+c_j+\frac{1}{2})^2=L_j(0,x_2+w_2,...,x_n+w_n)^2$. Thus, the second sum is zero or a complete quadratic polynomial in $n-1$ variables represented by $\Delta_{r-\lfloor G(n)\rfloor}$.
Hence it is represented by $\Delta_g$ for some integer $g\le g_\Delta(n-1)$. Hence, the proposition follows immediately from this.
\end{proof}

\begin{proof}[Proof of Theorem \ref{thm1}]
\noindent {\em } Clearly, $G(n)> 3n^2-3n+11$. Hence, by Proposition \ref{proprecgn}, 
\[g_\Delta(n)\le \sum_{j=3}^n G(j) + g_\Delta(2), \quad \text{for }n\ge3.\]
We will show that $g_\Delta(2)=12$ in Section \ref{gdelta1234}. Therefore, we have
\[g_\Delta(n)\le nG(n)=144D^6 n^{13} e^{4(\text{ln}(n+1)+(\text{ln}(n+1))^2)} e^{(4+4\sqrt{2})\sqrt{n/2}}.\]

\noindent Since $144D^6n^{13}e^{4(\text{ln}(n+1)+(\text{ln}(n+1))^2)}=O\left(e^{\varepsilon\sqrt{n}}\right)$ for any $\varepsilon>0$, we may conclude that 
\[ g_\Delta(n) = O\left(e^{(4+2\sqrt{2}+\varepsilon)\sqrt{n}}\right). \]
\end{proof}

\section{Exact value of $g_\Delta(n)$ for $2\le n \le 4$}\label{gdelta1234}
In this section, we always assume that $n$ is an integer such that $2\le n \le 4$ and we will determine the exact value of $g_\Delta(n)$. 
Let $K+\frac{1}{2}\ww$ be a $\mathbb{Z}$-coset in $\mathcal{K}_n^*$. 
From now on, we fix the following notations. 
We write $K=\mathbb{Z}[\dd_1,...,\dd_n]$ and $\ww = \dd_{i_1}+\cdots +\dd_{i_t}$ for some $t\ge1$ and  $1\le i_1 <\cdots < i_t\le n$. 
We denote the corresponding Gram matrix of $K$ with respect to $\{\dd_1,...,\dd_n\}$ by $M=(m_{ij})$ and we assume that $M$ is a Minkowski reduced symmetric matrix. 
By \cite{C} (see Lemma 1.2  of page 257), we have 
\begin{equation}\label{cond1}
0<m_{11}\le m_{22}\le ... \le m_{nn} \,\,\text{ and }  \,\, |2m_{ij}|\le m_{ii} \,\,\,\forall \,1\le i<j\le  n.
\end{equation}
We shall state two more technical lemmas, which will be used in the proof of Theorem \ref{thm2}.

\begin{lem}\label{lem3}
Let $Q(\mathbf{x})=Q(x_1\dd_1+\cdots+x_n\dd_n)$ be a positive definite quadratic form whose Gram matrix is a Minkowski reduced symmetric matrix $M=(m_{ij})$. Then, for any $1\le i \le n$, we have
\[Q(\mathbf{x})\ge C(n)m_{ii}x_i^2,\]
where $C(2)=\frac{3}{4}$, $C(3)=\frac{1}{2}$ and $C(4)=\frac{1}{5}$.
\end{lem}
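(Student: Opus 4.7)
The plan is to reduce the claim to the Minkowski product inequality $\det M \ge C(n)\prod_j m_{jj}$ via the Schur complement and Hadamard's inequality. Fix $i \in \{1,\dots,n\}$. Regarding $Q(\mathbf{x})$ as a function of $(x_j)_{j\ne i}$ with $x_i$ held fixed, the minimum over $\mathbb{R}^{n-1}$ is attained at the stationary point and equals $\frac{\det M}{\det M_{(i)}}\,x_i^2$ by the Schur complement formula, where $M_{(i)}$ denotes the principal submatrix of $M$ obtained by deleting the $i$th row and column. Since $M_{(i)}$ is itself positive definite, Hadamard's inequality yields $\det M_{(i)} \le \prod_{j\ne i}m_{jj}$, and hence
\[
Q(\mathbf{x}) \;\ge\; \frac{\det M}{\det M_{(i)}}\,x_i^2 \;\ge\; \frac{\det M}{\prod_{j}m_{jj}}\,m_{ii}\,x_i^2.
\]
It therefore suffices to show $\det M \ge C(n)\prod_j m_{jj}$ for Minkowski reduced $M$ in dimensions $n\le 4$ with the stated constants.

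For $n=2$ the inequality is immediate from \eqref{cond1}: $\det M = m_{11}m_{22}-m_{12}^2 \ge m_{11}m_{22}-m_{11}^2/4 \ge \tfrac{3}{4}\,m_{11}m_{22}$, using $|2m_{12}|\le m_{11}\le m_{22}$. For $n=3$ and $n=4$ the required inequality is the classical product inequality for Minkowski reduced positive definite forms, with sharp constants $\tfrac{1}{2}$ and $\tfrac{1}{4}$ respectively; in particular the weaker bound $\tfrac{1}{5}$ claimed for $n=4$ suffices. A key point is that \eqref{cond1} alone is \emph{not} enough: direct term-by-term bounding of the expansion of $\det M$ using only $|2m_{ij}|\le m_{ii}$ and $m_{ii}\le m_{jj}$ yields only $\det M \ge 0$. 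One must additionally invoke the stronger Minkowski conditions $Q\bigl(\sum_i \epsilon_i \dd_i\bigr) \ge m_{kk}$ (for appropriate indices $k$) over all sign patterns $\epsilon_i \in \{-1,0,1\}$, which constrain signed sums of the off-diagonal entries and cut the feasible region for $(m_{ij})_{i<j}$ strictly below what \eqref{cond1} describes.

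I expect the main obstacle to be the $n=4$ case. The off-diagonal entries $(m_{ij})_{i<j}$ then lie in a six-dimensional region defined by the full Minkowski reduction inequalities, and minimizing $\det M/\prod_j m_{jj}$ over it requires case analysis on the signs of the $m_{ij}$. Since the target constant $\tfrac{1}{5}$ is slack relative to the sharp $\tfrac{1}{4}$, each sign sector reduces to a routine but tedious optimization parallel to the $n=3$ argument; alternatively one may appeal directly to Minkowski's classical reduction theorem.
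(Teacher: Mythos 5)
Your proposal is correct and takes essentially the same route as the paper: a Schur-complement reduction to $Q(\mathbf{x})\ge \frac{\det M}{\det M_{(i)}}x_i^2$ followed by the classical product inequality $\prod_j m_{jj}\le \gamma_n^n\det M$ for Minkowski reduced forms in $n\le 4$ variables, which the paper likewise cites (from Cassels and van der Waerden) rather than re-proving. The only difference is that you bound $\det M_{(i)}$ by Hadamard's inequality where the paper uses the cruder estimate $\det M_{(i)}\le \frac{5}{4}\prod_{j\ne i}m_{jj}$ derived from \eqref{cond1}, which is why your argument yields the constant $\frac14$ for $n=4$ while the paper settles for $\frac15$.
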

\begin{proof}
We only provide a proof in the case when $n=4$. Other cases can be proved similarly ({\em cf.} see Lemma 2.3 of \cite{CO}). 
Fix an integer $i$ in $\{1,2,3,4\}$ and let $j<k<l$ be the remaining three integers listed in increasing order. Let
\[D_{jkl}=m_{jj}m_{kk}m_{ll}-m_{jj}m_{kl}^2-m_{kk}m_{jk}^2-m_{ll}m_{jk}^2+2m_{jk}m_{kl}m_{jl},\]
which is the determinant a $3\times 3$ submatrix of $M$. Hence, $D_{jkl}$ is positive, since $Q(\mathbf{x})$ is positive definite. 
From the fact that $\gamma_4^4=4$, where $\gamma_4$ is the $4$-dimensional Hermite constant, we have $m_{11}m_{22}m_{33}m_{44}\le 4D$, where $D$ is the discriminant of $Q$. We refer readers to \cite[Theorem 2.2, 3.1 of Chapter 12]{C} and \cite[Satz 7]{v} for more details.
By \eqref{cond1}, we have
\[D_{jkl}m_{ii}\le \left( \frac{5}{4}m_{jj}m_{kk}m_{ll} \right)m_{ii} \le 5D.\]
Now, by completing the squares, we have
\[Q(x_1\dd_1+\cdots+x_4\dd_4)\ge m_{jj}(x_j+\cdots)^2+\cdots+\frac{D}{D_{jkl}}x_i^2 \ge \frac{1}{5}m_{ii}x_i^2.\]
Hence, we prove the lemma.
\end{proof}

\begin{lem} \label{lem4}
Let $K+\frac{1}{2}\ww\in \mathcal{K}_n^*$ and $Q(\ww)\equiv k \pmod{8}$ with $1\le k \le 8$. Furthermore, let $S_2=\{5,6,7,8\}$, $S_3=\{6,7,8\}$ and $S_4=\{7,8\}$.\vspace{0.3em}\\
{\rm (1)} If $Q(\ww)\le 8$ or $k\in S_n$, then $K+\frac{1}{2}\ww \ra \Sigma_k$.\vspace{0.3em}\\
{\rm (2)} Suppose that $Q(\ww)>8$ and $k\not\in S_n$. If there are non-negative integers $k_{i_1},...,k_{i_t}$ such that 
\[ k_{i_1}+\cdots+k_{i_t}=k \quad \text{and} \quad Q(x_1\dd_1+\cdots+x_n\dd_n)-(k_{i_1}x_{i_1}^2+\cdots+k_{i_t}x_{i_t}^2)\]
is a positive definite quadratic form. Then $K+\frac{1}{2}\ww \ra \Sigma_{k+8}$.
\end{lem}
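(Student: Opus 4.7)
The plan is to handle both parts through Corollary \ref{localglobal}, Lemma \ref{lem1}, and the additive splitting of Gram matrices from Remark \ref{rmk1}(b).

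For part (1), the subcase $Q(\ww)\le 8$ is forced by Lemma \ref{lemneccond}(ii): any representation $K+\frac{1}{2}\ww\ra\Sigma_r$ must satisfy $r\equiv Q(\ww)\pmod 8$ and $r\le Q(\ww)$, leaving only $r=Q(\ww)=k$ in the range $1\le k\le 8$, and such an $r$ exists by the hypothesis $K+\frac{1}{2}\ww\in\mathcal{K}_n^*$. For the subcase $k\in S_n$, I would apply Corollary \ref{localglobal} (available since $k\le 8$) and verify local representability by $\Sigma_k$ at every prime. At odd primes the coset collapses to $K_p$ (as $1/2\in\z_p^\times$), so $K_p\ra(I_k)_p$ follows from Theorem 2 of \cite{OM1} once $k\ge n+3$. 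At $p=2$ I would invoke Lemma \ref{lem1}: its hypothesis is inherited from the global representation of $K+\frac{1}{2}\ww$, and the set $S_n$ is calibrated exactly so that $k\ge n+3$ (or $k\ge n+4$ when $k\equiv 0\pmod 4$), matching Lemma \ref{lem1}'s thresholds.

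For part (2), I would split the Gram matrix as $M=M_1+M_2$ with $M_1:=\sum_{s=1}^{t}k_{i_s}E_{i_s i_s}$ and $M_2:=M-M_1$ (the latter positive definite by hypothesis). By Remark \ref{rmk1}(b) it then suffices to establish
\[
M_1+\tfrac{1}{2}\ww\ \ra\ \Sigma_k \qquad\text{and}\qquad M_2+\tfrac{1}{2}\ww\ \ra\ \Sigma_8,
\]
and concatenate the resulting integer matrices. For the first, I would write down a $0/1$ matrix $T_1\in M_{n\times k}(\z)$ whose $i_s$-th row carries $k_{i_s}$ consecutive $1$'s in pairwise disjoint column blocks (for $s=1,\ldots,t$) and whose remaining rows vanish; then $T_1T_1^t=M_1$ and each column contains exactly one $1$ among the rows indexed by $\{i_1,\ldots,i_t\}$, so condition \eqref{eq3} holds.

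For the second representation, let $K'$ denote the $\z$-lattice with Gram matrix $M_2$. Since $Q'(\ww)=Q(\ww)-k\equiv 0\pmod 8$ has residue $8\in S_n$ for all $n\le 4$, I would again apply Corollary \ref{localglobal}: at odd primes $K'_p\ra(I_8)_p$ by Theorem 2 of \cite{OM1} since $8\ge n+3$, and at $p=2$ the congruence $Q'(\ww)\equiv 0\pmod 4$ together with $8\ge n+4$ puts us in the applicable branch of Lemma \ref{lem1}. The principal obstacle, and the step requiring the most care, is verifying the hypothesis of Lemma \ref{lem1} at $p=2$ for the new coset $K'_2+\frac{1}{2}\ww$, namely that it is already represented by \emph{some} $\Sigma_{g'}$ over $\z_2$. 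I would establish this by a dyadic Jordan-decomposition argument in the spirit of the $p=2$ case of Lemma \ref{lem2}, starting from the $\z_2$-representation of $K_2+\frac{1}{2}\ww$ supplied by $K+\frac{1}{2}\ww\in\mathcal{K}_n^*$ and exploiting the diagonal structure of $M_1$ to transfer it to $M_2+\frac{1}{2}\ww$.
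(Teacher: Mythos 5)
Your overall architecture coincides with the paper's: part (1) is handled exactly as in the paper (Lemma \ref{lemneccond}\ref{neccond:2} forces $r=Q(\ww)=k$ when $Q(\ww)\le 8$; for $k\in S_n$ one checks local representability at odd primes via $k\ge n+3$ and at $p=2$ via Lemma \ref{lem1}, then applies Corollary \ref{localglobal}), and for part (2) the paper likewise splits $M$ into the diagonal piece $\sum_s k_{i_s}E_{i_si_s}$, represented by $\Sigma_k$ through precisely the $0/1$ matrix you describe, plus a remainder $M_2$ to be represented by $\Sigma_8$ via Corollary \ref{localglobal} and Lemma \ref{lem1}. Up to that point your proposal is sound.

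The genuine gap is the step you yourself flag as the principal obstacle: verifying the hypothesis of Lemma \ref{lem1} at $p=2$ for the new coset $K'_2+\frac{1}{2}\ww'$ with Gram matrix $M_2=M-\sum_s k_{i_s}E_{i_si_s}$. You propose ``a dyadic Jordan-decomposition argument in the spirit of Lemma \ref{lem2}, exploiting the diagonal structure of $M_1$,'' but this is not an argument: $M_2$ is not explicit (only $M$ is constrained, and only through the existence of some global representation), so there is no Jordan splitting of $K'_2+\mathbb{Z}_2[\frac{1}{2}\ww']$ available to feed into O'Meara's Theorem 3, and it is not clear how subtracting a diagonal matrix interacts with that splitting. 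The paper resolves this with a concrete construction you are missing: pick $\varepsilon\in\mathbb{Z}_2^\times$ with $7\varepsilon^2=-1$ (possible since $-1/7\equiv 1\pmod{8}$ is a square in $\mathbb{Z}_2$), and append to the matrix $T_0$ of the given representation $K+\frac{1}{2}\ww\ra\Sigma_r$, for each $s$, a block of $7k_{i_s}$ columns each having the single nonzero entry $\varepsilon$ in row $i_s$. The resulting $n\times(r+7k)$ matrix $T'$ satisfies $T'(T')^t=M+\sum_s 7k_{i_s}\varepsilon^2E_{i_si_s}=M_2$ over $\mathbb{Z}_2$ and preserves the column-parity condition of \eqref{eq3}, so it exhibits $(K')_2+\frac{1}{2}\ww'\ra(\Sigma_{r+7k})_2$; only then does Lemma \ref{lem1} yield $(K')_2+\frac{1}{2}\ww'\ra(\Sigma_8)_2$ and the rest of your argument goes through. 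Without this (or an equivalent) device, the proof is incomplete at its crux.
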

\begin{proof}
(1) If $Q(\ww)\le8$ then the result follows from Lemma \ref{lemneccond}. 
Now we assume that $k\in S_n$. 
Since $k\ge n+3$, $K_p+\frac{1}{2}\ww=K_p$ is represented by $(I_k)_p=(\Sigma_k)_p$ for any prime $p\neq 2$. 
Also, by Lemma \ref{lem1}, $K_2+\frac{1}{2}\ww$ is represented by $(\Sigma_k)_2$. 
Thus, by Corollary \ref{localglobal}, $K+\frac{1}{2}\ww \ra \Sigma_k$.

\noindent (2) Let $\sigma_0 : K+\frac{1}{2}\ww \ra \Sigma_r$ be a representation of $\mathbb{Z}$-cosets.
Consider another $\mathbb{Z}$-coset $K'+\frac{1}{2}\ww'$, where $K'=\mathbb{Z}[\dd_1',...,\dd_n']$ is a $\mathbb{Z}$-lattice whose Gram matrix with respect to $\{\dd_1',...,\dd_n'\}$ is equal to 
\[M'=M-(k_{i_1}E_{i_1i_1}+\cdots+k_{i_t}E_{i_ti_t})\] 
and $\ww'=\dd_{i_1}'+\cdots+\dd_{i_t}'$. 
We note that $K'$ is positive definite and $Q(\ww') = Q(\ww)-k$ is a positive integer congruent to $0$ modulo $8$ by the hypothesis.

Let $T_0$ be the $n\times r$ integral matrix corresponding to $\sigma_0$ and let $\varepsilon$ be a unit in $\mathbb{Z}_2$ such that $-1=7\varepsilon^2$. 
We consider the following $n\times (r+7k)$ matrix $T'$ over $\mathbb{Z}_2$:
\vspace{0.5em}
\[
\begin{array}{rrl}
T':=&\left(
\begin{array}{ccc|ccc}
&& & \varepsilon \,\cdot\,\cdot\,\cdot\, \varepsilon&  & \\[-0.7em]
&T_0 && &\ddots&  \\ [-0.2em]
&&&&&\varepsilon \,\cdot\,\cdot\,\cdot\, \varepsilon
\end{array}
\right)
&
\begin{array}{l}
\leftarrow \quad i_1\text{-th row}\\
\\
\leftarrow \quad i_t\text{-th row.}
\end{array}
\\[-0.6em]
&\underbrace{\qquad\quad\,\,}_{7k_{i_1}\text{-copies}}
\hspace{0.92cm}
\underbrace{\qquad\quad\,\,}_{7k_{i_t}\text{-copies}}
\hspace{0.37cm}
\\
\end{array}
\]
Here, $\varepsilon$'s are all placed on $i_a$-th row for each $1\le a \le t$, only one $\varepsilon$ is placed on each column and 0's are placed elsewhere.
Then $T'$ induces a representation $\sigma' : (K')_2+\frac{1}{2}\ww' \ra (\Sigma_{r+7k})_2$ (see Remark \ref{rmk1} (a)), hence by Lemma \ref{lem1}, we have
\[(K')_2+\frac{1}{2}\ww' \ra (\Sigma_8)_2.\]
It is clear that $K'_p+\frac{1}{2}\ww=K'_p$ is represented by $(I_8)_p=(\Sigma_8)_p$ for any prime $p\neq 2$. 
Thus, by Corollary \ref{localglobal}, there is a representation of $\mathbb{Z}$-cosets $\sigma_1 : K'+\frac{1}{2}\ww' \ra \Sigma_8$. 
If we let $T_1$ be the $n\times 8$ matrix corresponding to $\sigma_1$, then the following $n\times (k+8)$ matrix $T$ over $\mathbb{Z}$
\[
\begin{array}{rrl}
T:=&\left(
\begin{array}{ccc|ccc}
&& & 1 \,\cdot\,\cdot\,\cdot\, 1&  & \\[-0.7em]
&T_1 && &\ddots&  \\ [-0.2em]
&&&&&1 \,\cdot\,\cdot\,\cdot\, 1
\end{array}
\right)
&
\begin{array}{l}
\leftarrow \quad i_1\text{-th row}\\
\\
\leftarrow \quad i_t\text{-th row,}
\end{array}
\\[-0.6em]
&\underbrace{\qquad\quad\,\,}_{k_{i_1}\text{-copies}}
\hspace{1.10cm}
\underbrace{\qquad\quad\,\,}_{k_{i_t}\text{-copies}}
\hspace{0.42cm}
\\
\end{array}
\]
induces a representation of $\mathbb{Z}$-cosets $\sigma : K+\frac{1}{2}\ww \ra \Sigma_{k+8}$.
\end{proof}

We are now ready to prove Theorem \ref{thm2}. First, we shall prove the following Proposition.

\begin{prop}\label{lbdgdeltan}
We have $g_\Delta(n)\ge n+10$ for any $2\le n \le 4$.
\end{prop}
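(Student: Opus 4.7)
The goal is to exhibit, for each $n \in \{2,3,4\}$, an explicit $\z$-coset $K_n + \frac{1}{2}\ww_n$ in $\mathcal{K}_n^*$ satisfying $g(K_n + \frac{1}{2}\ww_n) \geq n+10$. The anchor is the rank-one example: the coset $\z[\dd_1] + \frac{1}{2}\dd_1$ with $Q(\dd_1) = 42$ realizes $g_\Delta(1) = 10$ by Proposition \ref{gdelta1}, since $42 \equiv 2 \pmod{8}$ is not a sum of two odd squares.

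The strategy is to select $K_n$ of rank $n$ and a primitive vector $\ww_n \in K_n$ so that $Q(\ww_n) \equiv n+10 \pmod{8}$. By Lemma \ref{lemneccond}(ii), every representing $r$ then satisfies $r \equiv n+10 \pmod{8}$, so the only candidate below $n+10$ is $r = n+2$. I would first verify the necessary conditions in Lemma \ref{lemneccond}(i)--(iii) for the chosen coset and confirm it lies in $\mathcal{K}_n^*$ by exhibiting a representation into $\Sigma_{n+10}$, either via Lemma \ref{lem4} or by displaying an explicit integral matrix $T$ as in Remark \ref{rmk1}(a). The bulk of the proof is then to rule out $r = n+2$.

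Since $n+2$ is strictly less than both $n+3$ and $n+4$, Lemma \ref{lem1} does not guarantee local representability of the coset at $p = 2$, so a judicious choice of $K_n$ can realize a $2$-adic obstruction. Combinatorially, the parity condition of Remark \ref{rmk1}(a) forces $\sigma(\ww_n) \in \z^{n+2}$ to have all $n+2$ coordinates odd with $Q(\sigma(\ww_n)) = n+10$. Writing each coordinate as $\pm (2k_j+1)$, one gets $\sum_j k_j(k_j+1) = 2$, whose only solution up to permutation is a single $k_j = 1$ and the rest $k_j = 0$; hence $\sigma(\ww_n)$ has entries $(\pm 3, \pm 1, \ldots, \pm 1)$ up to permutation and sign. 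The decomposition $\sigma(\ww_n) = \sigma(\dd_{i_1}) + \cdots + \sigma(\dd_{i_t})$, together with the Gram relations $Q(\sigma(\dd_i)) = Q(\dd_i)$ and $B(\sigma(\dd_i),\sigma(\dd_j)) = B(\dd_i,\dd_j)$ dictated by $K_n$, then gives a finite system which, for a suitable $K_n$, admits no integer solution.

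The main obstacle is selecting $K_n$ with enough rigidity to kill every such decomposition. Several natural diagonal candidates fall short: for $K_n = \langle 42 \rangle \perp I_{n-1}$ with $\ww_n = \dd_1 + \cdots + \dd_n$, one readily writes a representation into $\Sigma_{n+9}$ by taking $\sigma(\dd_i) = \pm \ee_{j_i}$ at distinct columns for $i \geq 2$ and letting $\sigma(\dd_1)$ represent $42$ by ten odd squares on the remaining columns; similarly, the diagonal lattices $\langle 42 \rangle \perp \langle 2 \rangle^{n-1}$ (with $\ww_n = \dd_1$ or with $\ww_n = \dd_1 + \cdots + \dd_n$) admit representations by $\Sigma_r$ with $r < n+10$ for each $n \in \{2,3,4\}$. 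The successful $K_n$ must incorporate off-diagonal Gram entries or a particular $2$-adic Jordan structure so that every integer realization of $(\pm 3, \pm 1, \ldots, \pm 1)$ as a sum of the $\sigma(\dd_{i_j})$'s is forbidden by the Gram constraints. The rigorous exclusion works through the finitely many shapes of $\sigma(\ww_n)$ and the finitely many factorizations compatible with the Gram matrix of $K_n$, and forms the bulk of the case analysis.
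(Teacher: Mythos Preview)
Your overall architecture is right: pick $K_n+\frac12\ww_n\in\mathcal K_n^*$ with $Q(\ww_n)\equiv n+10\pmod 8$, exhibit a representation into $\Sigma_{n+10}$, and then rule out $\Sigma_{n+2}$. But the proposal stops short of a proof: you never name a specific $K_n$, and the promised combinatorial exclusion of the $(\pm3,\pm1,\dots,\pm1)$ shapes is only sketched. Until you fix a Gram matrix and actually run the case analysis (or produce some other obstruction), this remains a plan rather than an argument.

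The paper proceeds differently and much more cheaply. It takes, for $n=2$, the Gram matrix $\left(\begin{smallmatrix}8&2\\2&12\end{smallmatrix}\right)$ with $\ww=\dd_2$; for $n=3,4$, the diagonal forms $\mathrm{diag}(3,3,23)$ and $\mathrm{diag}(1,3,3,23)$ with $\ww=\sum_i\dd_i$. The key point is that the exclusion of $r=n+2$ is done by a \emph{local} obstruction on the underlying lattice, not by analyzing integer matrices $T$: for $n=2$ one checks that $K$ is not represented by $I_4$ over $\q_2$, and for $n=3,4$ that $K$ is not represented by $I_5$ (resp.\ $I_6$) over $\q_3$. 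Since any coset representation $K+\frac12\ww\ra\Sigma_{n+2}$ would in particular give $K\ra I_{n+2}$ rationally at every place, this single local check kills $r=n+2$ in one line. Your combinatorial route would eventually succeed for a well-chosen $K_n$, but it trades a short invariant computation for a by-hand enumeration; note also that the paper's obstruction for $n=3,4$ lives at $p=3$, not at $p=2$ as you anticipated.
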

\begin{proof}
 Let $K=\mathbb{Z}[\dd_1,\dd_2]$ be a $\mathbb{Z}$-lattice whose Gram matrix with respect to $\{\dd_1,\dd_2\}$ is ${\small\begin{pmatrix}8&2\\ 2&12 \end{pmatrix}}$ and $\ww=\dd_2$. 
If $K+\frac{1}{2}\ww$ is represented by $\Sigma_r$, then $r\equiv 4 \Mod{8}$ by Lemma \ref{lemneccond}. Note that the following matrix
\[T:=\begin{pmatrix} 
1\hspace{0.8em}1\hspace{0.8em}1&1&1&-1&-1&-1&0&0&0&0 \\ 
1\hspace{0.8em}1\hspace{0.8em}1&1&1&1&1&1&1&1&1&1 \end{pmatrix}\]
induces a representation of $\mathbb{Z}$-cosets $K+\frac{1}{2}\ww\ra \Sigma_{12}$ (see Remark \ref{rmk1} (a)). However, $K+\frac{1}{2}\ww$ cannot be represented by $\Sigma_{4}$, since $K$ cannot be represented by $I_4$ over $\mathbb{Q}_2$. Thus, we have $g_\Delta(2)\ge12$.\\
\indent For the case when $n=3$ or $4$, we consider a $\mathbb{Z}$-lattice $K=\mathbb{Z}[\dd_1,...,\dd_n]$ whose Gram matrix with respect to $\{\dd_1,...,\dd_n\}$ is a diagonal matrix 
\[
\text{diag}(3,3,23) \text{ or } \text{diag}(1,3,3,23), \text{ respectively},
\]
and $\ww=\sum_{i=1}^n \dd_i$. Then $Q(\ww)\equiv 5 \text{ or } 6 \text{ (mod }8\text{)}$, respectively, and one may find a representation of $\mathbb{Z}$-cosets from $K+\frac{1}{2}\ww$ to $\Sigma_{13}$ or $\Sigma_{14}$, respectively. However, $K$ cannot be represented by $I_5$ or $I_6$, respectively, over $\mathbb{Q}_3$. Hence, we have $g_\Delta(3)\ge 13$ and $g_\Delta(4)\ge 14$.
\end{proof}

\begin{proof}[Proof of Theorem \ref{thm2}] 
By Propositions \ref{gdelta1} and \ref{lbdgdeltan}, it is enough to prove that $g_\Delta(n)\le n+10$ for each $2\le n \le 4$. 
The proof is a case-by-case analysis according to $n$ and the shape of $\ww$.
For each case, the proof will show how we can determine $g_\Delta(n)\le n+10$.

We assume that $Q(\ww)\equiv k \Mod{8}$ with $1\le k \le 8$ and let $r_{K,\ww}$ be the integer defined in Lemma \ref{lemneccond}.
Also, we assume that the Gram matrix $M$ of $K$ is Minkowski reduced so that $M$ satisfies all conditions given in \eqref{cond1}.
Also, by replacing $\dd_j$ with $\pm \dd_j$ suitably, we may further assume that
\begin{equation} \label{cond2}
m_{1j}\ge0 \quad \text {for any } 2 \le j \le n.
\end{equation}
Under the conditions \eqref{cond1} and \eqref{cond2}, the necessary and sufficient condition for $M$ to be a Minkowski reduced positive definite form is that
\begin{equation}\label{cond3}
m_{ij} \ge -\frac{1}{2}(m_{11}+m_{ii})+m_{1i}+m_{1j} \quad \text{for any } 2\le i < j \le n,
\end{equation}
and when $n=4$,
\begin{equation}\label{cond4}
\begin{array} {rcl}
&& -\frac{1}{2}(m_{22}+m_{33})-m_{23}-m_{24},\\ [0.2em]
&&-\frac{1}{2}(m_{22}+m_{33})+m_{23}+m_{24},\\ [-0.5em]
m_{34}&\ge& \\[-0.5em]
&&-\frac{1}{2}(m_{11}+m_{22}+m_{33})+m_{12}+m_{13}+m_{14}-m_{23}-m_{24},\\[0.2em]
&&-\frac{1}{2}(m_{11}+m_{22}+m_{33})-m_{12}+m_{13}+m_{14}+m_{23}+m_{24},
\end{array}
\end{equation}

\begin{equation}\label{cond5}
\begin{array} {rcl}
&& \frac{1}{2}(m_{22}+m_{33})-m_{23}+m_{24},\\ [0.2em]
&&\frac{1}{2}(m_{22}+m_{33})+m_{23}-m_{24},\\ [-0.5em]
m_{34}&\le& \\[-0.5em]
&&\frac{1}{2}(m_{11}+m_{22}+m_{33})-m_{12}+m_{13}-m_{14}-m_{23}+m_{24},\\[0.2em]
&&\frac{1}{2}(m_{11}+m_{22}+m_{33})-m_{12}-m_{13}+m_{14}+m_{23}-m_{24}.
\end{array}
\end{equation}

\noindent \textbf{(Case 1)} 
We shall prove $g_\Delta(2)\le 12$ by showing $K+\frac{1}{2}\ww$ is represented by $\Sigma_r$ for some $r\le 12$. 
By Lemma \ref{lemneccond} and part (1) of Lemma \ref{lem4}, we may assume that condition \ref{neccond:1} of Lemma \ref{lemneccond} holds and 
\[Q(\ww)>12, \,\,  r_{K,\ww}>12 \text{ and } 1\le k\le 4.\]
\noindent\underline{Case 1-(i)} 
Assume that $\ww=\dd_i$ for $i=1$ or $2$.
From the assumption, we have $m_{ii}>12$.
Hence, we have $Q(\mathbf{x})-kx_i^2$ is positive definite, since by Lemma \ref{lem3},
\[Q(\mathbf{x})-kx_i^2 \ge \frac{3}{4} \text{max} (m_{11}x_1^2,m_{22}x_2^2)-4x_i^2 >0,\]
for any $\mathbf{x}\neq \mathbf{0}$. Therefore, by Lemma \ref{lem4} (2), we conclude that $K+\frac{1}{2}\ww$ is represented by $\Sigma_{k+8}$, where $k+8\le 12$.\\

\noindent\underline{Case 1-(ii)}
Assume that $\ww=\dd_1+\dd_2$. If $m_{22} \ge 6$, then $Q(\mathbf{x})-kx_2^2$ is positive definite by Lemma \ref{lem3}, hence we are done by Lemma \ref{lem4}. 
For any $M$ satisfying $1\le m_{11} \le m_{22} \le 5$ and $0\le m_{12} \le \frac{1}{2}m_{11}$, it does not satisfy the assumption of (Case 1). This proves Case 1.\\

\noindent \textbf{(Case 2)} 
Now we shall prove $g_\Delta(3)\le 13$ by showing $K+\frac{1}{2}\ww$ is represented by $\Sigma_r$ for some $r\le 13$. 
As in Case 1, we may assume that condition \ref{neccond:1} of Lemma \ref{lemneccond} holds and 
\[Q(\ww)>13, \,\,  r_{K,\ww}>13 \text{ and } 1\le k\le 5.\]
\noindent\underline{Case 2-(i)} 
Assume that $\ww=\dd_i$ for $i=1,2$ or $3$.
From the assumption, we have $m_{ii}>13$.
Hence, we have $Q(\mathbf{x})-kx_i^2$ is positive definite, since by Lemma \ref{lem3},
\[ Q(\mathbf{x})-kx_i^2 \ge \frac{1}{2} \text{max} (m_{11}x_1^2,m_{22}x_2^2,m_{33}x_3^2)-5x_i^2 >0,\]
for any $\mathbf{x}\neq \mathbf{0}$. Therefore, by Lemma \ref{lem4} (2), we conclude that $K+\frac{1}{2}\ww$ is represented by $\Sigma_{k+8}$, where $k+8\le 13$.\\

\noindent\underline{Case 2-(ii)} 
Assume that $\ww=\dd_1+\dd_2$. 
If $m_{22}\ge 10$ then $Q(\mathbf{x})-kx_2^2$ is positive definite by Lemma \ref{lem3}, hence we are done by Lemma \ref{lem4} (2).
Now, we may assume that 
\[1\le m_{11} \le m_{22} \le 9 \quad \text{and} \quad 0\le m_{12},m_{13} \le \frac{1}{2}m_{11}.\]
We note that for each triple $(m_{11},m_{22},m_{12})$ satisfying the assumption of Case 2, there exist non-negative integers $k_1,k_2$ such that $k_1+k_2=k$ and $Q(x_1\dd_1+x_2\dd_2)-(k_1x_1^2+k_2x_2^2)$ is positive definite.
Once $m_{11},m_{22},m_{12},m_{13}$ are decided, there are only finitely many candidates of $m_{23}$ by \eqref{cond2} and \eqref{cond3}.

Now, for each fixed $(m_{11},m_{22},m_{12},m_{13},m_{23})$, we do the following process. 
Let $m_{33}$ be the smallest integer greater than or equal to $m_{22}$ satisfying condition \ref{neccond:1} of Lemma \ref{lemneccond}. 
We search for non-negative integers $k_1,k_2$ such that $k_1+k_2=k$ and $Q(\mathbf{x})-(k_1x_1^2+k_2x_2^2)$ is positive definite. 
Once we find such $k_1,k_2$, then we are done by Lemma \ref{lem4} (2). 
Otherwise, we put the matrix $M$ in a list, raise $m_{33}$ by $2$ and then repeat searching for $k_1,k_2$. 
Note that this process ends in a finite number of steps, since the discriminant of the form $Q(\mathbf{x})-(k_1x_1^2+k_2x_2^2)$ is an increasing linear function of $m_{33}$ for each possible pair $(k_1,k_2)$. 
Running this process by a computer program, the final list of matrices obtained is empty.\\

\noindent\underline{Case 2-(iii)}
For the remaining cases, we have $i_t=3$. 
If $m_{33}\ge 10$, then $Q(\mathbf{x})-kx_3^2$ is positive definite by Lemma \ref{lem3}. 
Thus, we are done by Lemma \ref{lem4} (2).
Hence, we are left with finitely many candidates of $M$ all of which have $m_{33}\le 9$.
For each such $M$ satisfying the assumption of Case 2, by a computer program, we can find non-negative integers $k_{i_1},...,k_{i_t}$ such that
\[k_{i_1}+\cdots+k_{i_t}=k \quad \text{and}\quad Q(\mathbf{x})-(k_{i_1}x_{i_1}^2+\cdots+k_{i_t}x_{i_t}^2)\]
is positive definite. Thus, we are done by Lemma \ref{lem4} (2).\\

\noindent \textbf{(Case 3)} 
Lastly, we shall prove $g_\Delta(4)\le 14$ by showing $K+\frac{1}{2}\ww$ is represented by $\Sigma_r$ for some $r\le 14$. 
As before, we may assume that condition \ref{neccond:1} of Lemma \ref{lemneccond} holds and 
\[Q(\ww)>14, \,\,  r_{K,\ww}>14 \text{ and } 1\le k\le 6.\]
\noindent\underline{Case 3-(i)} 
Assume that $i_t=4$, where there are $8$ possible cases.  
If $m_{44}\ge 31$, then by Lemma \ref{lem3}, $Q(\mathbf{x})-kx_4^2$ is positive definite, since
$$ Q(\mathbf{x})-kx_4^2 \ge \frac{1}{5} \text{max} (m_{11}x_1^2,m_{22}x_2^2,m_{33}x_3^2,m_{44}x_4^2)-6x_4^2 >0,$$
for any $\mathbf{x}\neq \mathbf{0}$.
Thus, we are done by Lemma \ref{lem4} (2).
By \eqref{cond1}$-$\eqref{cond5}, we are left with finitely many candidates of $M$ to check.
For each of these $M$ that satisfies the assumption of Case 3, by a computer program, we can find non-negative integers $k_{i_1},...,k_{i_t}$ such that 
\[k_{i_1}+\cdots+k_{i_t}=k \quad \text{and}\quad Q(\mathbf{x})-(k_{i_1}x_{i_1}^2+\cdots+k_{i_t}x_{i_t}^2)\] 
is positive definite, except for the four $\mathbb{Z}$-cosets $M+\frac{1}{2}\ww$, where $\ww=\dd_1+\dd_4$ and $M$ is one of the following matrices:
\[\begin{small}
	\begingroup
\setlength\arraycolsep{3pt}
\begin{pmatrix}
9&3&3&2\\ 
3&9&3&-4\\
3&3&9&-4\\
2&-4&-4&9
\end{pmatrix},
\begin{pmatrix}
9&3&4&2\\
3&9&3&-4\\
4&3&9&-3\\
2&-4&-3&9
\end{pmatrix},
\begin{pmatrix}
9&4&3&2\\
4&9&3&-3\\
3&3&9&-4\\
2&-3&-4&9
\end{pmatrix},
\begin{pmatrix}
9&4&4&2\\
4&9&3&-3\\
4&3&9&-3\\
2&-3&-3&9
\end{pmatrix}.
\endgroup
\end{small}
\]
Note that, two $\mathbb{Z}$-cosets corresponding to the first matrix and the last one with $\ww=\dd_1+\dd_4$ are isometric to each other, and they are represented by $\Sigma_{14}$. Also, the other two matrices also give an equivalent $\mathbb{Z}$-cosets, which are represented by $\Sigma_{14}$. 
This, together with Lemma \ref{lem4} (2), implies the claim in this case. \\

\noindent\underline{Case 3-(ii)} 
Assume that $i_t=3$, where there are $4$ possible cases.  
If $m_{33}\ge 31$, then $Q(\mathbf{x})-kx_3^2$ is positive definite by Lemma \ref{lem3}. Thus, we are done by Lemma \ref{lem4} (2).
Now, we may assume that 
\[1\le m_{11} \le m_{22} \le m_{33} \le 30.\]
Then there are only finitely many candidates of matrix $M-m_{44}E_{44}$. 
Note that, for each candidate that we should concern, there exist non-negative integers $k_{i_1},...,k_{i_t}$ such that 
\[k_{i_1}+\cdots+k_{i_t}=k\quad \text{and}\quad Q(x_1\dd_1+x_2\dd_2+x_3\dd_3)-(k_{i_1}x_{i_1}^2+\cdots+k_{i_t}x_{i_t}^2)\]
is positive definite.
We can run a process similar to the one described in the Case 2-(ii).
However this time we have four matrices on the final list and they appear when $\ww=\dd_1+\dd_3$. 
Each of the corresponding $\mathbb{Z}$-cosets is isometric to one of the $\mathbb{Z}$-cosets described in Case 3-(i). Hence we proves the claim.\\

\noindent\underline{Case 3-(iii)} 
Assume that $i_t=2$. 
By a similar argument as before, we may assume that $1\le m_{11} \le m_{22}\le 30$. Then there are only finitely many candidates of $7$-tuple $(m_{11},m_{12},m_{13},m_{14},m_{22},m_{23},m_{24})$.
For each of these $7$-tuples that satisfies the assumption of Case 3, we can check that $Q(x_1\dd_1+x_2\dd_2)-kx_2^2$ is positive definite and $C:=m_{11}-\frac{5m_{12}^2}{2(m_{22}-k)}>0$.
Let $m_{22}'=m_{22}-k$ and put
\[
C_0:=\text{max}\left( \frac{10}{3}\cdot\frac{m_{23}}{m_{22}'}+\frac{m_{13}(m_{13}+m_{14})}{C} \begin{matrix} \\,\end{matrix}\,\, \frac{10}{3}\cdot \frac{m_{24}}{m_{22}'}+\frac{m_{14}(m_{13}+m_{14})}{C} \right).
\]
If $m_{33}>\frac{8}{3}C_0$, then $Q(\mathbf{x})-kx_2^2 $ is greater than or equal to 
\begin{equation*}
\begin{aligned}[t]
\frac{2m_{22}'}{5}\left(\!x_2+\frac{5}{2}\frac{m_{12}}{m_{22}'}x_1\!\right)^2 \!\! +
\frac{3m_{22}'}{10}\left(\!x_2+\frac{10}{3}\frac{m_{13}}{m_{22}'}x_3\!\right)^2 \!\! +
\frac{3m_{22}'}{10}\left(\!x_2+\frac{10}{3}\frac{m_{14}}{m_{22}'}x_4\!\right)^2\\[3pt]
+C\cdot\frac{m_{13}}{m_{13}+m_{14}}\left(x_1+\frac{m_{13}}{C}x_3\right)^2+
C\cdot\frac{m_{14}}{m_{13}+m_{14}}\left(x_1+\frac{m_{14}}{C}x_4\right)^2\\[3pt]
+Q(x_3\dd_3+x_4\dd_4)-C_0(x_3^2+x_4^2).
\end{aligned}
\end{equation*}
Since $m_{44}\ge m_{33} > \frac{8}{3}C_0$ and $Q(x_3\dd_3+x_4\dd_4)\ge \frac{3}{8}(m_{33}x_3^2+m_{44}x_4^2)$ by Lemma \ref{lem3}, we conclude that $Q(\mathbf{x})-kx_2^2$ is positive definite.
Hence, we are done by Lemma \ref{lem4} (2).

Now, we may assume that $m_{33}\le \frac{8}{3}C_0$, so that there are only finitely many candidates of matrix $M-m_{44}E_{44}$.
We note that the quadratic form $Q(x_1\dd_1+x_2\dd_2+x_3\dd_3)-kx_2^2$ is positive definite for each of these candidates.
We run the same process as described in the Case 3-(ii) and obtain a list of four matrices $M$ which appear only when $\ww=\dd_1+\dd_2$.
Each of the corresponding $\mathbb{Z}$-cosets is isometric to one of the $\mathbb{Z}$-cosets described in Case 3-(i). Hence we proves the claim.\\

\noindent\underline{Case 3-(iv)} 
Finally, we assume that $\ww=\dd_1$. 
One may check by Lemma \ref{lem3} that $Q(\mathbf{x})-kx_1^2$ is positive definite except for $m_{11}\in \{20,21,22,30\}$. Hence, by Lemma \ref{lem4} (2), we may assume that $m_{11}\in \{20,21,22,30\}$, so that there are only finitely many candidates of $4$-tuple $(m_{11},m_{12},m_{13},m_{14})$.

Put $m_{11}'=m_{11}-k$ and $m_{234}=m_{12}+m_{13}+m_{14}$. If $m_{234}=0$, that is, $m_{12}=m_{13}=m_{14}=0$, then $Q(\mathbf{x})-kx_1^2$ is positive definite obviously. Otherwise, we note that
\[
\begin{array}{rcl}
Q(\mathbf{x})-kx_1^2&=& \displaystyle\sum_{j=2}^4 m_{11}'\frac{m_{12}}{m_{234}}\left(x_1+\frac{m_{234}}{m_{11}'}x_2\right)^2  \\
&&+\,\,Q(x_2\dd_2+x_3\dd_3+x_4\dd_4)- \dfrac{m_{234}}{m_{11}'}\cdot \displaystyle\sum_{j=2}^4 m_{1j}x_j^2.
\end{array}
\]
Hence, if $m_{22}>2\cdot\frac{m_{234}^2}{m_{11}'}$, then $Q(\mathbf{x})-kx_1^2$ is positive definite, since
\[Q(x_2\dd_2+x_3\dd_3+x_4\dd_4)\ge \sum_{j=2}^4 \frac{1}{2}\frac{m_{1j}}{m_{234}} m_{jj}x_j^2 >\dfrac{m_{234}}{m_{11}'}\cdot \sum_{j=2}^4 m_{1j}x_j^2,\]
\noindent by Lemma \ref{lem3}.
Thus, we may assume that $m_{22}\le 2\cdot\frac{m_{234}^2}{m_{11}'}$ and there are only finitely many candidates of $7$-tuple $(m_{11},m_{12},m_{13},m_{14},m_{22},m_{23},m_{24})$.
By a similar argument used in Case 3-(iii), we may further assume that $m_{33}$ is bounded and run the process as described in the other cases.
This time the final list is empty and so we are done.
\end{proof}

\begin{rmk}
One may naturally expect that $g_\Delta(5)=15$. 
However, if we consider a $\mathbb{Z}$-coset $K+\frac{1}{2}\dd_5$, where $K=\mathbb{Z}[\dd_1,...,\dd_5]$ whose Gram matrix with respect to $\{\dd_1,...,\dd_5\}$ is a diagonal matrix $\text{diag}( 2,2,2,2,16)$, then one may verify that 
$$K+\frac{1}{2}\dd_5 \ra \Sigma_{16}, \quad \text{but} \quad K+\frac{1}{2}\dd_5 \nra \Sigma_8,$$
which implies that $g_\Delta(5)\ge16$. 
\end{rmk}

\subsection*{Acknowledgments}
I would like to express my gratitude to Professor Byeong-Kweon Oh who is my supervisor and Professor Wai Kiu Chan for their valuable advice, and to the referee for carefully reading this paper and making many helpful comments.


\begin{thebibliography}{abcd}

\bibitem  {BCIL} C. N. Beli, W. K. Chan, M. I. Icaza and J. Liu, {\em On a Waring's problem for integral quadratic and hermitian forms}, Tran. Amer. Math. Soc., electronically published on September 28, 2018, DOI : https://doi.org/10.1090/tran/7571 (to appear in print).

\bibitem  {C} J. W. S. Cassels, {\em Rational quadratic forms}, Academic Press, London, 1978.

\bibitem  {CO} W. K. Chan and B.-K. Oh, {\em Representations of integral quadratic polynomials}, Contemp. Math. \textbf{587} (2013),  31-46.

\bibitem  {HKK} J. S. Hsia, Y. Kitaoka and M. Kneser, {\em Representations of positive definite quadratic forms}, J. Reine Angew. Math. \textbf{301} (1978), 132-141.

\bibitem  {I} M. I. Icaza, {\em Sums of squares of integral linear forms}, Acta Arith. \textbf{124} (1996), 231-241.

\bibitem {KO} K. Kim and B.-K. Oh, {\em A sum of squares not divisible by a prime}, arXiv:1805.03038, 2018

\bibitem {KO1} M.-H. Kim and B.-K. Oh, {\em Representations of positive definite senary integral quadratic forms by a sum of squares}, J. Number Theory \textbf{63} (1997), 89-100.


\bibitem {KO3} M.-H. Kim and B.-K. Oh, {\em Representations of integral quadratic forms by sums of squares}, Math. Z. \textbf{250} (2005), 427-442.

\bibitem {Kn} K. Knopp, {\em Theory and application of infinite series}, Blackie \& Son, Ltd., London and Glasgow, 1951.

\bibitem {K1} C. Ko, {\em On the representation of a quadratic form as a sum of squares of linear forms}, Quart. J. Math. Oxford \textbf{8} (1937), 81-98.

\bibitem {K2} C. Ko, {\em On the decomposition of quadratic forms in six variables}, Acta Arith. \textbf{3} (1939), 64-78.

\bibitem {M1} L. J. Mordell, {\em A new Waring's problem with squares of linear forms}, Quart. J. Math. Oxford \textbf{1} (1930), 276-288.

\bibitem {M2} L. J. Mordell, {\em The representation of a definite quadratic form as a sum of two others}, Ann. of Math. (2) \textbf{38} (1937), 751-757.


\bibitem  {OM1} O. T. O'Meara, {\em The integral representations of quadratic forms over local field}, Amer. J. Math. \textbf{80} (1958), 843-878. 

\bibitem  {OM2} O. T. O'Meara, {\em Introduction to quadratic forms}, Springer Verlarg, New York, 1963. 

\bibitem {P} O. Perron, {\em {\"U}ber das Verhalten einer ausgearteten hypergeometrischen Reihe bei unbegrenztem Wachstum eines Parameters}, J. Reine Angew. Math. \textbf{151} (1921), 63-78.

\bibitem {S} C. P. Schnorr, {\em A hierarchy of polynomial time lattice basis reduction algorithms}, Theoret. Comput. Sci. \textbf{53} (1987), 201-224.


\bibitem {v} L. van der Waeden, {\em Die Reduktionstheorie der positiven quadratischen Formen}, Acta Math. \textbf{96} (1956), 265-309.
\end{thebibliography}
\end{document}